\newcounter{minutes}
\newcounter{hours}
\newtheorem{theorem}{Theorem}
\newtheorem{corollary}{Corollary}
\newtheorem{lemma}{Lemma}
\keywords{Analytic functions, Univalent functions, Integral Operator, Generalized Bessel functions, Ahlfors-Becker
univalence criteria, fractional differential operator} 
\subjclass[2010]{Primary 33C10, 30C45;Secondary 30C20, 30C75.}
\email{halkarasani@ud.edu.sa}
\email{abeer.zahrani@windowslive.com}
\email{ssalhajri@ud.edu.sa}
\begin{document}
\def\thefootnote{}
\def\thefootnote{\@arabic\c@footnote}

\title{Univalence of Criteria for Linear Fractional Differential Operator $%
D_{\lambda }^{n,\alpha }$\ with the Bessel Functions}
\author{H.A. Al-Kharsani}
\address{Department of Mathematics,College of Sciences, University Of
Dammam, 838 Dammam, Saudi Arabia}
\author{Abeer M. Al-Zahrani}
\address{}
\author{S.S. Al-Hajri}
\address{}
\maketitle

\begin{abstract}
In this paper our aim is to extend and improve the sufficient conditions for
integral operators involving the normalized forms of the generalized Bessel
functions of the first kind to be univalent in the open unit disk as
investigated recently by \textit{(Erhan, E. Orhan, H. and Srivastava, H.
(2011). Some sufficient conditions for univalence of certain families of
integral operators involving generalized Bessel functions. Taiwanese Journal
of Mathematics, 15 (2), pp.}883-917\textit{)} and \textit{(Baricz, \textsc{%
\'{A}}. and Frasin, B. (2010). Univalence of integral operators involving
Bessel functions. Applied Mathematics Letters, 23 (4), pp.371--376).}
\end{abstract}

\footnotetext{\texttt{File:~\jobname .tex, printed: \number\year-0\number%
\month-\number\day, \thehours.\ifnum\theminutes<10{0}\fi\theminutes}} 
\makeatletter

\makeatother

\section{\textbf{Introduction and some preliminary results}}

\setcounter{equation}{0}

Several applications of Bessel functions arise naturally in a wide variety
of problems in applied mathematics, statistics, operational research,
theoretical physics and engineering sciences . Bessel functions are series
solutions to a second order differential equation that ascend in many and
diverse situations. Bessel's differential equation of order $\nu $ is
defined as (see, for details, \cite{22}):

\begin{equation}
z^{2}w^{2}+bzw+[dz^{2}-~\nu ^{2}+(1-b))~\nu ]w=0\;\;b,d,\nu \in \mathbb{C}
\label{eq1.1}
\end{equation}

A particular solution of the differential equation $\left( \ref{eq1.1}%
\right) $, which is denoted by $w_{v~,b,d}(z)\ $is called the generalized
Bessel function of the first kind of order $\nu $. In fact, we have the
following familiar series representation for the function $w_{v~,b,d}(z)~$:%
\begin{equation}
\displaystyle~w_{v~,b,d}(z)~=\sum_{n=0}^{\infty }\frac{(-d)^{n}}{n!\Gamma
(~\nu +n+\frac{b+1}{2}1)}.\left( \frac{z}{2}\right) ^{2n+~\nu }(z\in \mathbb{%
C).}  \label{eq2.1}
\end{equation}

\bigskip where $\Gamma (z)$ stands for the Euler gamma function. The series
in $\left( \ref{eq2.1}\right) $ permits us to study the Bessel, the modified
Bessel and the spherical Bessel functions in a unified manner. Each of these
particular cases of the function $w_{v~,b,d}(z)~$ is worthy of mention here.

\textbullet\ For $b=d=1$ in $\left( \ref{eq2.1}\right) $, we obtain the
familiar Bessel function $~J_{v}(z)$ defined by (see \cite{22}; see also 
\cite{6a})

\begin{equation}
\displaystyle~J_{v}(z)~=\sum_{n=0}^{\infty }\frac{(-1)^{n}}{n!\Gamma (v+n+1)}%
.\left( \frac{z}{2}\right) ^{2n+v}(z\in \mathbb{C).}  \label{eq3.1}
\end{equation}

\textbullet\ For$b=-d=1$ in $\left( \ref{eq2.1}\right) $, we obtain the
familiar Bessel function$~I_{v}(z)$defined by (see \cite{22}; see also \cite%
{6a})

\begin{equation}
\displaystyle~I_{v}(z)~=\sum_{n=0}^{\infty }\frac{1}{n!\Gamma (v+n+1)}%
.\left( \frac{z}{2}\right) ^{2n+v}(z\in \mathbb{C).}  \label{eq4.1}
\end{equation}

Now, consider the function $u_{v,b,d}:\mathbb{C}\rightarrow \mathbb{C},$
defined by the transformation 
\begin{equation*}
u_{v,b,d}(z)=2^{v}\Gamma \left( v+\frac{b+1}{2}\right) \ z^{-v/2}w_{v,b,d}(%
\sqrt{z}).
\end{equation*}%
By using the well-known Pochhammer (or Appell ) symbol, defined in terms of
the Euler gamma function, 
\begin{equation*}
(a)_{n}=\frac{\Gamma (a+n)}{\Gamma (a)}=a(a+1)\ldots (a+n-1)
\end{equation*}%
and $(a)_{0}=1,$ we obtain for the function $u_{v,b,d}$ the following
representation 
\begin{equation*}
\displaystyle u_{v,b,d}(z)=\sum_{n\geq 0}\frac{(-d/4)^{n}}{{\left( v+\frac{%
b+1}{2}\right) }_{n}}\frac{z^{n}}{n!},
\end{equation*}%
where $v+\frac{b+1}{2}\neq 0,-1,-2,{\dots }.$ This function is analytic on $%
\mathbb{C}$ and satisfies the second order linear differential equation 
\begin{equation*}
4z^{2}u^{\prime \prime }(z)+2(2v+b+1)zu^{\prime }(z)+d\ zu(z)=0.
\end{equation*}%
We now introduce the function $\varphi _{v,b,d}(z)$ defined in terms of the
generalized Bessel function $w_{v,b,d}(z)$ by%
\begin{eqnarray*}
\varphi _{v,b,d}(z) &=&zu_{v,b,d}(z) \\
&=&2^{v}\Gamma \left( v+\frac{b+1}{2}\right) \ z^{1-v/2}w_{v,b,d}(\sqrt{z})
\\
&=&z+\sum_{n=1}^{\infty }\frac{(-d)^{n}z^{n+1}}{4^{n}n!(k)_{n}}.\ \ \ \
\left( k=v+\frac{b+1}{2}\right)
\end{eqnarray*}

Let $A$ denote the class of analytic function $f$ defined in the unit disk $%
\displaystyle U=\{z:|z|<1\}$and has the form $\displaystyle %
f(z)=z+\sum_{k=2}^{\infty }a_{k}z^{k}.$For functions $\displaystyle %
f(z)=z+\sum_{k=2}^{\infty }a_{k}z^{k}$ and $g(z)=z+\sum_{k=2}^{\infty
}b_{k}z^{k},$ the Hadamard product (or convolution) $f\ast g$ is defined, as
usual, by$\displaystyle(f\ast g)(z)=z+\sum_{k=2}^{\infty }a_{k}b_{k}z^{k}.$

\bigskip This paper deals with the linear fractional differential operator $%
D_{\lambda }^{n,\alpha }$ for complex numbers $\alpha ,\lambda $ ,where 
\begin{equation*}
D_{\lambda }^{n,\alpha }f(z)=\underbrace{[(D_{\lambda }^{1,\alpha }\ast
D_{\lambda }^{1,\alpha }\ldots \ast D_{\lambda }^{1,\alpha })]}%
_{n-times}\ast f(z))
\end{equation*}%
\begin{equation*}
D_{\lambda }^{1,\alpha }=\Gamma (2-\alpha )z^{\alpha }D_{z}^{\alpha
}f(z)\ast g_{\lambda }(z),\,\,\alpha \neq 2,3,4,...
\end{equation*}%
\begin{eqnarray*}
g_{\lambda }(z) &=&\frac{z-(1-\lambda )z^{2}}{(1-z)^{2}}=z+\sum_{k=2}^{%
\infty }[1+\lambda (k-1)]z^{k},\  \\
D_{z}^{\alpha }f(z) &=&\frac{1}{\Gamma (1-\alpha )}\frac{d}{dz}%
\dint\limits_{0}^{z}\frac{f(t)}{(z-t)^{\alpha }}dt,0\leq \alpha <1
\end{eqnarray*}%
The operator $D_{\lambda }^{n,\alpha }$ was interoduced by \cite{3}. Using
the fractional derivative of order $\alpha ,D_{z}^{\alpha }\ $\cite{11b},
Owa and Srivastava \cite{12} introduced the operator $\Omega ^{\alpha
}:A\rightarrow A,$ which is known as an extension of fractional derivative
and fractional integral, as follows\newline
\begin{eqnarray*}
\Omega ^{\alpha }f(z) &=&\Gamma (2-\alpha )z^{\alpha }D_{z}^{\alpha
}f(z),\,\,\alpha \neq 2,3,4 \\
&=&z+\sum_{k=2}^{\infty }\frac{\Gamma (k+1)\Gamma (2-\alpha )}{\Gamma
(k+1-\alpha )}a_{k}z^{k} \\
&=&\varphi (2,2-\alpha ;z)\ast f(z)
\end{eqnarray*}%
In \cite{3}, the linear fractional differential operator $D_{\lambda
}^{n,\alpha }:A\rightarrow A$ is defined as follows,\newline
\begin{equation}
D_{\lambda }^{n,\alpha }f(z)=z+\sum_{k=2}^{\infty }\left( \frac{\Gamma
(k+1)\Gamma (2-\alpha )}{\Gamma (k+1-\alpha )}(1+\lambda (k-1))\right)
^{n}a_{k}z^{k}  \label{eq5.1}
\end{equation}%
When $\alpha =0,$ we get Al-Oboudi differential operator \cite{2}, when $%
\alpha =0$ and $\lambda =1,$ we get Salagean differential operator \cite{17}
and when $n=1$ and $\lambda =0,$ we get Owa-Srivastave fractional
differential operator \cite{12}.\newline

We now introduce the linear fractional differential operator $D_{\lambda
}^{n,\gamma }\varphi _{v_{,b,d}}:A\rightarrow A$ 
\begin{equation}
D_{\lambda }^{n,\gamma }\varphi _{v_{,b,d}}(z)=2^{v}\Gamma \left( v+\frac{b+1%
}{2}\right) D_{\lambda }^{n,\gamma }\left[ z^{1-v/2}w_{v,b,d}(\sqrt{z})%
\right]  \label{eq:1}
\end{equation}

Various general families of integral operators were introduced and studied
earlier in Geometric function theory, such as ( see \cite{18},\cite{19},\cite%
{6},\cite{20},\cite{11a},\cite{11} and \cite{15})\newline
\begin{equation}
H_{\alpha _{1},\alpha _{2}}\ldots _{\alpha _{m},\beta }(z)=\left[ \beta
\int_{0}^{z}t^{\beta -1}\Pi _{i=1}^{m}\left( \frac{h_{i}(t)}{t}\right) ^{%
\frac{1}{\alpha _{i}}}dt\right] ^{\frac{1}{\beta }}~~~~~~~~~~~  \label{eq6.1}
\end{equation}%
\begin{equation}
F_{m,\gamma }(z)=\left[ (m\gamma +1)\int_{0}^{z}\Pi _{i=1}^{m}\left(
f_{i}(t)\right) ^{\gamma }dt\right] ^{1/(m\gamma +1)}~~  \label{eq:1.8}
\end{equation}

\begin{equation}
G_{\lambda }(z)=\left[ \lambda \int_{0}^{z}t^{\lambda -1}\left(
e^{g(t)}\right) ^{\lambda }dt\right] ^{\frac{1}{\lambda }}~~  \label{eq7.1}
\end{equation}

where the functions $h_{1},\ldots ,h_{m},f_{1},\ldots ,f_{m}$ and $g$ \
belong to the class $A$ and the parameters $\alpha _{1},\ldots ,\alpha
_{m}\in \mathbb{C}\backslash \left\{ 0\right\} $ and $\beta ,\gamma ,\lambda 
$ are complex numbers such that the integrals in $\left( \ref{eq6.1}\right) $
,$\left( \ref{eq:1.8}\right) $ and $\left( \ref{eq7.1}\right) $ exist. Here
and throughout this paper every many valued function is taken with the
principle branch.\newline

Two of the most important and known univalence criteria for analytic
functions defined in the open unit disk $\mathbb{U}$ were obtained by
Ahlfors \cite{1} and Becker \cite{8} and by Becker (see \cite{9}). Some
extensions of these two univalence criteria were given by Pescar (see \cite%
{14}) involving a parameter $\beta $ and by Pascu (see \cite{13}) involving
two parameters $\alpha $ and $\beta $ . Bulut \cite{10} obtained Sufficient
conditions for the univalence of the integral operator%
\begin{equation}
I_{\beta }^{n,\gamma }(f_{1},\ldots ,f_{m})=\left\{ \beta
\int_{0}^{z}t^{\beta -1}\prod_{i=1}^{m}\left( \frac{D_{\lambda }^{n,\gamma
}f_{i}(t)}{t}\right) ^{\alpha _{i}}dt\right\} ^{\frac{1}{\beta }}
\label{eq8.1}
\end{equation}%
\begin{equation*}
(z\in \mathbb{U}),n\in \mathbb{N}_{0},m\in \mathbb{N},\beta \in \mathbb{C}%
\text{ with }\mathfrak{R}(\beta )>0\ \text{and }\alpha _{i}\in \mathbb{C}%
(i\in \{1,\ldots ,m\}).
\end{equation*}

Recently, szasz and Kupan \cite{21} investigated the univalence of the
normalized Bessel function of the first kind $\phi _{k}:\mathbb{D}%
\rightarrow \mathbb{C},$ defined by \newline
\begin{eqnarray*}
\phi _{k}(z) &=&2^{k}\Gamma (k+1)z^{1-k/2}J_{k}\left( z^{\frac{1}{2}}\right)
\\
\displaystyle~~ &=&z+\sum_{n=1}^{\infty }\frac{(-1)^{n}z^{n+1}}{%
4^{n}n!(k+1)_{n}}.
\end{eqnarray*}

Families of Integral operators of types $\left( \ref{eq6.1}\right) $ and $%
\left( \ref{eq7.1}\right) $ which involve the normalized forms of the
generalized Bessel functions of the first kind have been investigated in
[4-6] and \cite{7} to obtain sufficient conditions for integral operators to
be univalent in the open unit disk. The main object of this paper is to
extend and improve a forementioned results of \cite{11} and \cite{6}. For
this purpose, it is organized as follows. In Section 2, we prove
inequalities of compositions of the linear fractional differential operator $%
D_{\lambda }^{n,\alpha }$ with the Bessel function of the first kind of
order $\upsilon $ in terms of generalized hypergeometric function. In
Section 3 ,we present univalence criteria for compositions of\ the linear
fractional differential operator $D_{\lambda }^{n,\alpha }$\ with the Bessel
function in $\left( \ref{eq6.1}\right) ,\left( \ref{eq7.1}\right) $and $%
\left( \ref{eq8.1}\right) $. In Section 4, we discuss and compare our
special cases with those in \cite{11} and \cite{6}.

\section{Preliminary Lemmas}

\setcounter{equation}{0}

The following result can be obtained by setting $k_{v,c}(z)=D_{\lambda
}^{n,\gamma }\varphi _{v,b,d}(z)\ $in the known result due to (\cite{16},
Theorem 1).

\begin{theorem}
\label{theorem2.3} Let $v\neq -1,-2,-3,...$ and $b,d\in \mathbb{C}$. If $%
D_{\lambda }^{n,\gamma }\varphi _{v,b,d}$ satisfies any one of the following
inequalities:%
\begin{equation}
\left\vert \frac{z^{2}(D_{\lambda }^{n,\gamma }\varphi _{v,b,d})^{^{\prime
}}(z)}{\left[ D_{\lambda }^{n,\gamma }\varphi _{v,b,d}(z)\right] ^{2}}\left( 
\frac{\left( zD_{\lambda }^{n,\gamma }\varphi _{v,b,d}(z)\right) ^{^{\prime
\prime }}}{(D_{\lambda }^{n,\gamma }\varphi _{v,b,d})^{^{\prime }}(z)}-\frac{%
2z(D_{\lambda }^{n,\gamma }\varphi _{v,b,d})^{^{\prime }}(z)}{D_{\lambda
}^{n,\gamma }\varphi _{v,b,d}(z)}\right) \right\vert <1\;\;\;(z\in \mathbb{U}%
),  \label{eq:2.4}
\end{equation}%
\newline
\begin{equation}
\left\vert \frac{\left[ D_{\lambda }^{n,\gamma }\varphi _{v,b,d}(z)\right]
^{2}}{z^{2}(D_{\lambda }^{n,\gamma }\varphi _{v,b,d})^{^{\prime }}(z)}\left( 
\frac{\left[ zD_{\lambda }^{n,\gamma }\varphi _{v,b,d}(z)\right] ^{^{\prime
\prime }}}{(D_{\lambda }^{n,\gamma }\varphi _{v,b,d})^{^{\prime }}(z)}-\frac{%
2z(D_{\lambda }^{n,\gamma }\varphi _{v,b,d})^{^{\prime }}(z)}{D_{\lambda
}^{n,\gamma }\varphi _{v,b,d}(z)}\right) \right\vert <\frac{1}{4}\;\;\;(z\in 
\mathbb{U}),
\end{equation}%
\begin{equation}
\left\vert \frac{\frac{\left[ zD_{\lambda }^{n,\gamma }\varphi _{v,b,d}(z)%
\right] ^{^{\prime \prime }}}{(D_{\lambda }^{n,\gamma }\varphi
_{v,b,d})^{^{\prime }}(z)}-\frac{2z(D_{\lambda }^{n,\gamma }\varphi
_{v,b,d})^{^{\prime }}(z)}{D_{\lambda }^{n,\gamma }\varphi _{v,b,d}(z)}}{%
\frac{z^{2}(D_{\lambda }^{n,\gamma }\varphi _{v,b,d})^{^{\prime }}(z)}{\left[
D_{\lambda }^{n,\gamma }\varphi _{v,b,d}(z)\right] ^{2}}-1}\right\vert <%
\frac{1}{2}\;\;\;(z\in \mathbb{U}),
\end{equation}%
\newline
\begin{equation}
\Re \left\{ \frac{z^{2}(D_{\lambda }^{n,\gamma }\varphi _{v,b,d})^{^{\prime
}}(z)}{\left[ D_{\lambda }^{n,\gamma }\varphi _{v,b,d}(z)\right] ^{2}}\left( 
\frac{\frac{\left[ zD_{\lambda }^{n,\gamma }\varphi _{v,b,d}(z)\right]
^{^{\prime \prime }}}{(D_{\lambda }^{n,\gamma }\varphi _{v,b,d})^{^{\prime
}}(z)}-\frac{2z(D_{\lambda }^{n,\gamma }\varphi _{v,b,d})^{^{\prime }}(z)}{%
D_{\lambda }^{n,\gamma }\varphi _{v,b,d}(z)}}{\frac{z^{2}(D_{\lambda
}^{n,\gamma }\varphi _{v,b,d})^{^{\prime }}(z)}{\left[ D_{\lambda
}^{n,\gamma }\varphi _{v,b,d}(z)\right] ^{2}}-1}\right) \right\}
<1\;\;\;(z\in \mathbb{U}),  \label{eq:2.7}
\end{equation}%
then $D_{\lambda }^{n,\gamma }\varphi _{v,b,d}$ is univalent in $\mathbb{U}$.
\end{theorem}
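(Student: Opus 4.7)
The plan is to obtain this theorem as an immediate corollary of Theorem~1 of \cite{16}, which furnishes the same four sufficient conditions for univalence in $\mathbb{U}$ of an arbitrary normalized analytic function. I would take the abstract function $k_{v,c}$ appearing in that theorem and substitute $k_{v,c}(z):=D_{\lambda}^{n,\gamma}\varphi_{v,b,d}(z)$; once the hypotheses of the cited theorem are checked for this choice, each of \eqref{eq:2.4}--\eqref{eq:2.7} reduces to the corresponding specialization, and univalence of $D_{\lambda}^{n,\gamma}\varphi_{v,b,d}$ in $\mathbb{U}$ follows at once.

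The first and only genuine step is to confirm that $D_{\lambda}^{n,\gamma}\varphi_{v,b,d}$ belongs to the class $A$, since that is what \cite{16} requires. The hypothesis $v\neq -1,-2,-3,\ldots$ together with $b\in\mathbb{C}$ keeps $k=v+(b+1)/2$ away from the non-positive integer poles of the Gamma function that appear in the denominator, so the series
\[
\varphi_{v,b,d}(z)=z+\sum_{n=1}^{\infty}\frac{(-d)^{n}z^{n+1}}{4^{n}n!(k)_{n}}
\]
is entire and normalized by $\varphi_{v,b,d}(0)=0$, $\varphi'_{v,b,d}(0)=1$. Applying the operator $D_{\lambda}^{n,\gamma}$ through \eqref{eq5.1} multiplies each coefficient $a_{k}$ by an explicit scalar that is regular under the same parameter restriction, so it preserves both analyticity on $\mathbb{U}$ and the standard normalization $f(0)=0$, $f'(0)=1$; hence $D_{\lambda}^{n,\gamma}\varphi_{v,b,d}\in A$.

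The remaining step is the substitution itself, which is formal: replacing $k_{v,c}(z)$ by $D_{\lambda}^{n,\gamma}\varphi_{v,b,d}(z)$ throughout Theorem~1 of \cite{16} produces verbatim the four inequalities \eqref{eq:2.4}--\eqref{eq:2.7}. I do not anticipate a substantive obstacle; the only delicate point is that the quotients appearing in the four criteria must be meaningful on $\mathbb{U}$, and this is automatic because $D_{\lambda}^{n,\gamma}\varphi_{v,b,d}$ has a simple zero at the origin with nonzero derivative there, so the singularities at $z=0$ in the various expressions $z^{2}f'/f^{2}$, $f^{2}/(z^{2}f')$, etc., are removable, and the bounds imposed by the hypotheses prevent the denominators from introducing genuine poles elsewhere in $\mathbb{U}$.
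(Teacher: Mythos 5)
Your proposal is exactly the paper's route: the authors obtain the theorem by setting $k_{v,c}(z)=D_{\lambda}^{n,\gamma}\varphi_{v,b,d}(z)$ in Theorem~1 of \cite{16}, and you do the same, with the added (and reasonable) care of checking that $D_{\lambda}^{n,\gamma}\varphi_{v,b,d}\in A$ so the cited result applies. No gap; if anything your write-up is more explicit than the paper's one-line justification.
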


By using Pochhammer symbol $(\lambda )_{\mu }$ we obtain the following
series representation for $D_{\lambda }^{n,\gamma }$ as follows:\newline
\begin{equation}
\displaystyle D_{\lambda }^{n,\gamma }f(z)=\left\{ 
\begin{array}{l}
z+\displaystyle\sum_{m=2}^{\infty }\left[ \frac{\left( 1+\frac{1}{\lambda }%
\right) _{m-1}(2)_{m-1}}{\left( \frac{1}{\lambda }\right) _{m-1}(2-\gamma
)_{m-1}}\right] ^{n}a_{m}z^{m}\quad \lambda >0 \\ 
\displaystyle z+\sum_{m=2}^{\infty }\left[ \frac{(2)_{m-1}}{(2-\gamma )_{m-1}%
}\right] a_{k}z^{k}\quad \lambda =0%
\end{array}%
\right.  \label{eq2.23}
\end{equation}%
The following notations will be useful in the sequel:%
\begin{gather*}
\delta (k+w)=\delta (k+w,\lambda ,\gamma ,n)=(k+w)(\frac{1}{\lambda }%
+w)^{n}(2-\gamma +w)^{n}\ \ \ ,w\in \left\{ 0,1,2\right\} \\
\beta (d)=\beta (d,\lambda ,m,n)=\left\vert d\right\vert (\frac{1}{\lambda }%
+m)^{n}(m+1)^{n} \\
\mathcal{M=M(}\gamma ,\lambda ,m,n)=(2-\gamma )^{n}(\frac{1}{\lambda }%
+m)^{n}(m+1)^{n} \\
\mathcal{N=N(}\lambda ,n)=2^{n}(\lambda +1)^{n} \\
\alpha ((w\pm 1)k+w)=w\ \mathcal{N}\ \delta (k+1)\pm \mathcal{M}\ k\ \ \
,w\in \left\{ 0,1,2\right\}
\end{gather*}

The following result is mainly based on \cite{7} and is one of the crucial
facts in the proofs of our main results.

\begin{theorem}
\label{theorem2.4}If the parameters $v,b\in \mathbb{R}$ and $d\in \mathbb{C}$
are so constrained that\newline
\begin{equation*}
\displaystyle k=v+\dfrac{b+1}{2}>\max \left\{ 0,\frac{\beta (d)}{4(1+\frac{1%
}{\lambda })^{n}(3-\gamma )^{n}}-1\right\} ,
\end{equation*}%
\newline
then the function\newline
\begin{equation*}
\displaystyle\frac{D_{\lambda }^{n,\gamma }\varphi _{v,b,d}(z)}{z}:\mathbb{U}%
\rightarrow \mathbb{C}
\end{equation*}%
satisfies the following inequality:\newline
\begin{equation}
\displaystyle\frac{4\mathcal{M}\ k\ \delta (k+1)-\alpha (2k+1)\beta (d)+%
\frac{\mathcal{N}}{8}\left[ \beta (d)\right] ^{2}}{\mathcal{M}\ k[4\delta
(k+1)-\beta (d)]}\leqq \left\vert \frac{D_{\lambda }^{n,\gamma }\varphi
_{v,b,d}(z)}{z}\right\vert \leqq \frac{32\left[ \delta (k)\right] ^{2}-\left[
\beta (d)\right] ^{2}}{8\delta (k)[4\delta (k)-\beta (d)]}\quad (z\in 
\mathbb{U}).
\end{equation}
\end{theorem}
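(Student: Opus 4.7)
My plan is to bound $D_\lambda^{n,\gamma}\varphi_{v,b,d}(z)/z$ term-by-term using a geometric majorant. First I would combine the series for $\varphi_{v,b,d}$ with the explicit series representation (2.23) of $D_\lambda^{n,\gamma}$ to write
\begin{equation*}
\frac{D_\lambda^{n,\gamma}\varphi_{v,b,d}(z)}{z}=1+\sum_{m=1}^{\infty} c_m z^m,\qquad c_m=\left[\frac{(1+\tfrac{1}{\lambda})_m (2)_m}{(\tfrac{1}{\lambda})_m (2-\gamma)_m}\right]^{n}\frac{(-d)^m}{4^m\, m!\,(k)_m}.
\end{equation*}
Using the two identities $(1+\tfrac{1}{\lambda})_m/(\tfrac{1}{\lambda})_m=1+m\lambda$ and $(2)_m/m!=m+1$, the modulus $|c_m|$ rewrites cleanly in terms of the Pochhammer symbols $(k)_m$ and $(2-\gamma)_m$ and the quantities $(1+m\lambda)^n$, $(m+1)^n$ that appear in $\beta(d)$, $\mathcal{M}$, $\mathcal{N}$.

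Second, on $\mathbb{U}$ the triangle inequality gives
\begin{equation*}
1-\sum_{m=1}^{\infty}|c_m|\;\leq\;\left|\frac{D_\lambda^{n,\gamma}\varphi_{v,b,d}(z)}{z}\right|\;\leq\;1+\sum_{m=1}^{\infty}|c_m|.
\end{equation*}
I would then majorize $|c_m|$ by a geometric sequence. For the upper bound the natural inequality is $(k)_m\geq k^m$ (so that the common ratio becomes $\beta(d)/(4\delta(k))$), and the numerical hypothesis on $k$ forces this ratio to lie in $(0,1)$; summing the resulting geometric series and algebraically reassembling the factors in the notation $\delta(k),\beta(d)$ should yield exactly the quoted upper bound $\bigl(32[\delta(k)]^2-[\beta(d)]^2\bigr)/\bigl(8\delta(k)[4\delta(k)-\beta(d)]\bigr)$. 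For the lower bound, the sharper estimate $(k)_m\geq k(k+1)^{m-1}$ shifts the $\delta$-argument from $k$ to $k+1$, and the hypothesis $k>\beta(d)/(4(1+1/\lambda)^n(3-\gamma)^n)-1$ is precisely the statement $4\delta(k+1)>\beta(d)$, which again makes the resulting geometric series convergent; its closed form, after rearrangement, should match the quoted lower expression involving $\mathcal{M}$, $\mathcal{N}$, and $\alpha(2k+1)$.

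The main obstacle will be choosing the Pochhammer estimates with exactly the right tightness to reproduce the given rational-function bounds rather than merely looser ones. In particular, the explicit appearance of $[\beta(d)]^2$ in both the numerators suggests that the $m=1$ term must be handled separately (where the inequality $(k)_m\geq k(k+1)^{m-1}$ is an equality at $m=1$, giving the clean contribution proportional to $\mathcal{N}[\beta(d)]^2/8$), and the tail $m\geq 2$ is then summed as a genuine geometric series. Once that decomposition is executed, the remaining work is purely algebraic manipulation to recognize the resulting closed forms in terms of the abbreviations $\delta,\beta,\mathcal{M},\mathcal{N},\alpha$ defined after (2.23).
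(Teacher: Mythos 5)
Your plan reproduces the paper's proof: the paper likewise expands $D_\lambda^{n,\gamma}\varphi_{v,b,d}(z)/z$ as $1+\sum_{m\ge 1}c_m z^m$, applies the triangle inequality, splits off the $m=1$ term, and majorizes the tail $m\ge 2$ by a geometric series whose ratio is $\beta(d)/(4\delta(k))$ for the upper bound (via $(k)_m\ge k^m$) and $\beta(d)/(4\delta(k+1))$ for the lower bound (via $(k)_m=k(k+1)_{m-1}\ge k(k+1)^{m-1}$), the hypothesis on $k$ being exactly the convergence condition $4\delta(k+1)>\beta(d)$. The one estimate you leave implicit --- the source of the factor $\tfrac{1}{2}$ on the tail that is needed to reach the stated constants (in particular the $[\beta(d)]^2$ terms) rather than looser ones --- is $(s)_m\,m!\ge 2s^m$ for $m\ge 2$, which is precisely the inequality the paper invokes.
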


\begin{proof}
By using the well known triangle inequality%
\begin{equation*}
|z_{1}-z_{2}|\geq \left\vert |z_{1}|-|z_{2}|\right\vert
\end{equation*}%
and the inequalities%
\begin{equation*}
(k)_{n}\ n!\geq 2k^{n}\ \ \ ,\forall n\geq 2
\end{equation*}%
\begin{equation*}
(s)_{n}\geq s^{n}
\end{equation*}%
\begin{equation*}
(s)_{n}\leq (s+n-1)^{n}
\end{equation*}%
we obtain for all $z\in \mathbb{U}$%
\begin{eqnarray*}
\left\vert \frac{D_{\lambda }^{n,\gamma }\varphi _{v,b,d}(z)}{z}\right\vert
&=&\left\vert 1+\sum_{m=1}^{\infty }\dfrac{(-d)^{m}}{m!4^{m}(k)_{m}}\left[ 
\frac{\left( 1+\frac{1}{\lambda }\right) _{m}(2)_{m}}{\left( \frac{1}{%
\lambda }\right) _{m}(2-\gamma )_{m}}\right] ^{n}z^{m}\right\vert \\
&\geq &1-\sum_{m=1}^{\infty }\dfrac{\left\vert d\right\vert ^{m}}{%
m!4^{m}(k)_{m}}\left[ \frac{\left( 1+\frac{1}{\lambda }\right) _{m}(2)_{m}}{%
\left( \frac{1}{\lambda }\right) _{m}(2-\gamma )_{m}}\right] ^{n} \\
&\geq &1-\left( \frac{2(1+\lambda )}{(2-\gamma )}\right) ^{n}\dfrac{|d|}{4k}%
\sum_{m=1}^{\infty }\dfrac{|d|^{m-1}}{4^{m-1}(k+1)_{m}(m!)}\left[ \frac{%
\left( 2+\frac{1}{\lambda }\right) _{m-1}(3)_{m-1}}{\left( 1+\frac{1}{%
\lambda }\right) _{m-1}(3-\gamma )_{m-1}}\right] ^{n} \\
&\geq &1-\left( \frac{2(1+\lambda )}{(2-\gamma )}\right) ^{n}\dfrac{|d|}{4k}%
\ \left[ 1+\frac{1}{2}\sum_{m=2}^{\infty }\left[ \frac{|d|}{4(k+1)}\frac{%
\left( \frac{1}{\lambda }+m\right) ^{n}(m+1)^{n}}{\left( 1+\frac{1}{\lambda }%
\right) ^{n}(3-\gamma )^{n}}\right] ^{m-1}\right] \\
&=&\frac{4\mathcal{M}\ k\ \delta (k+1)-\alpha (2k+1)\beta (d)+\frac{\mathcal{%
N}}{8}\left[ \beta (d)\right] ^{2}}{\mathcal{M}\ k[4\delta (k+1)-\beta (d)]}
\end{eqnarray*}%
Which is positive if%
\begin{equation*}
32\mathcal{M}\ k\ \delta (k+1)-8\alpha (2k+1)\beta (d)+\mathcal{N}\left[
\beta (d)\right] ^{2}>0
\end{equation*}%
Similarly,by using the triangle inequality%
\begin{equation*}
|z_{1}+z_{2}|\leq |z_{1}|+|z_{2}|
\end{equation*}%
and the inequalities:%
\begin{equation*}
(s)_{n}\ n!\geq 2s^{n}\ \ ,\forall n\geq 2
\end{equation*}%
\begin{equation*}
(s)_{n}\geq s^{n}
\end{equation*}%
\begin{equation*}
(s)_{n}\leq (s+n-1)^{n}
\end{equation*}%
we obtain for all $z\in \mathbb{U}$%
\begin{eqnarray*}
\left\vert \frac{D_{\lambda }^{n,\gamma }\varphi _{v,b,d}(z)}{z}\right\vert
&=&\left\vert 1+\sum_{m=1}^{\infty }\dfrac{(-d)^{m}}{m!4^{m}(k)_{m}}\left[ 
\frac{\left( 1+\frac{1}{\lambda }\right) _{m}(2)_{m}}{\left( \frac{1}{%
\lambda }\right) _{m}(2-\gamma )_{m}}\right] ^{n}z^{m}\right\vert \\
&\leq &1+\sum_{m=1}^{\infty }\dfrac{\left\vert d\right\vert ^{m}}{%
m!4^{m}(k)_{m}}\left[ \frac{\left( 1+\frac{1}{\lambda }\right) _{m}(2)_{m}}{%
\left( \frac{1}{\lambda }\right) _{m}(2-\gamma )_{m}}\right] ^{n} \\
&\leq &1+\left( \frac{2(1+\lambda )}{(2-\gamma )}\right) ^{n}\dfrac{|d|}{4k}+%
\frac{1}{2}\sum_{m=2}^{\infty }\left[ \frac{|d|}{4(k)}\frac{\left( \frac{1}{%
\lambda }+m\right) ^{n}(m+1)^{n}}{\left( \frac{1}{\lambda }\right)
^{n}(2-\gamma )^{n}}\right] ^{m} \\
&=&\frac{32\left[ \delta (k)\right] ^{2}-\left[ \beta (d)\right] ^{2}}{%
8\delta (k)[4\delta (k)-\beta (d)]}
\end{eqnarray*}%
Thus, the proof is complete.
\end{proof}

\begin{theorem}
\label{theorem2.5}If the parameters $v,b\in \mathbb{R}$ and $d\in \mathbb{C}$
are so constrained that\newline
\begin{equation*}
\displaystyle k>\max \left\{ 0,\frac{\beta (d)}{4(1+\frac{1}{\lambda }%
)^{n}(3-\gamma )^{n}}-1\right\} ,
\end{equation*}%
then the function\newline
\begin{equation*}
\displaystyle D_{\lambda }^{n,\gamma }\varphi _{v,b,d}(z):\mathbb{U}%
\rightarrow \mathbb{C}
\end{equation*}%
satisfies the following inequalities:\newline
\begin{equation}
\displaystyle\left\vert \left( D_{\lambda }^{n,\gamma }\varphi
_{v,b,d}(z)\right) ^{\prime }-\frac{D_{\lambda }^{n,\gamma }\varphi
_{v,b,d}(z)}{z}\right\vert \leqq \frac{\mathcal{N}\ \delta (k+1)\beta (d)}{%
\mathcal{M}\ k[4\delta (k+1)-\beta (d)]}\quad (z\in \mathbb{U}),
\label{eq:2.19}
\end{equation}%
\newline
\end{theorem}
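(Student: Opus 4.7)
The plan is to carry over the structure of the proof of Theorem~\ref{theorem2.4} to handle the difference $(D_{\lambda}^{n,\gamma}\varphi_{v,b,d})'-D_{\lambda}^{n,\gamma}\varphi_{v,b,d}/z$. Using (2.23), write
\begin{equation*}
D_{\lambda}^{n,\gamma}\varphi_{v,b,d}(z)=z+\sum_{m=1}^{\infty}A_{m}\,z^{m+1},\qquad A_{m}=\frac{(-d)^{m}}{m!\,4^{m}(k)_{m}}\left[\frac{(1+\tfrac{1}{\lambda})_{m}(2)_{m}}{(\tfrac{1}{\lambda})_{m}(2-\gamma)_{m}}\right]^{n}.
\end{equation*}
Differentiating term-by-term and subtracting the quotient $D_{\lambda}^{n,\gamma}\varphi_{v,b,d}(z)/z=1+\sum_{m\ge 1}A_{m}z^{m}$, the constant terms cancel and each coefficient acquires an extra factor of $m$:
\begin{equation*}
\bigl(D_{\lambda}^{n,\gamma}\varphi_{v,b,d}(z)\bigr)'-\frac{D_{\lambda}^{n,\gamma}\varphi_{v,b,d}(z)}{z}=\sum_{m=1}^{\infty}m\,A_{m}\,z^{m}.
\end{equation*}

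Next I would apply the triangle inequality with $|z|<1$, replace $m/m!$ by $1/(m-1)!$, and pull out the $m=1$ leading piece by means of the shift identities $(k)_{m}=k(k+1)_{m-1}$, $(1+\tfrac{1}{\lambda})_{m}=(1+\tfrac{1}{\lambda})(2+\tfrac{1}{\lambda})_{m-1}$, $(2)_{m}=2(3)_{m-1}$, $(\tfrac{1}{\lambda})_{m}=(\tfrac{1}{\lambda})(1+\tfrac{1}{\lambda})_{m-1}$, and $(2-\gamma)_{m}=(2-\gamma)(3-\gamma)_{m-1}$. This isolates the prefactor $|d|\mathcal{N}/[4k(2-\gamma)^{n}]$ (already visible inside the proof of Theorem~\ref{theorem2.4}) and leaves a residual sum indexed by $j=m-1$, whose $j=0$ contribution is exactly $1$.

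The residual $j\ge 1$ tail is estimated termwise using the three Pochhammer inequalities recorded in the proof of Theorem~\ref{theorem2.4}, namely $(s)_{n}n!\ge 2s^{n}$ for $n\ge 2$, $(s)_{n}\ge s^{n}$, and $(s)_{n}\le(s+n-1)^{n}$. After these estimates the tail collapses into a geometric series whose common ratio is $\beta(d)/[4\delta(k+1)]$; the hypothesis on $k$ is precisely the condition that forces this ratio to lie in $(0,1)$, so the series converges. Summing it, combining with the prefactor, and regrouping in terms of $\mathcal{M}$, $\mathcal{N}$, $\delta(k+1)$, and $\beta(d)$ produces the closed form $\mathcal{N}\,\delta(k+1)\,\beta(d)/\{\mathcal{M}\,k\,[4\delta(k+1)-\beta(d)]\}$ claimed in (2.19).

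The main obstacle is bookkeeping rather than anything conceptual. Because the derivative converts $m!$ to $(m-1)!$, the inequality $(k+1)_{m-1}(m-1)!\ge 2(k+1)^{m-1}$ has to be checked separately at $m=2$, where it is sharp, and at $m\ge 3$, where it follows from $(s)_{n}n!\ge 2s^{n}$ applied with $n=m-1\ge 2$. Once that verification is in hand, the explicit sum of the geometric tail and the $m=1$ prefactor combine, after routine algebraic simplification, into precisely the right-hand side of (2.19), completing the proof.
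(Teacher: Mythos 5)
Your route is the same as the paper's: expand $D_{\lambda }^{n,\gamma }\varphi _{v,b,d}$ via (\ref{eq2.23}), note that the derivative minus the quotient equals $\sum_{m\geq 1}mA_{m}z^{m}$, apply the triangle inequality, shift the Pochhammer symbols to extract the prefactor $\left( \tfrac{2(1+\lambda )}{2-\gamma }\right) ^{n}\tfrac{|d|}{4k}=\tfrac{\mathcal{N}\beta (d)}{4\mathcal{M}k}$, bound the remaining ratios by $(s)_{n}\geq s^{n}$ and $(s)_{n}\leq (s+n-1)^{n}$, and sum a geometric series with ratio $r=\beta (d)/[4\delta (k+1)]$, which the hypothesis on $k$ places in $(0,1)$. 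That is exactly the paper's proof of (\ref{eq:2.19}) (for $\lambda >0$; the paper treats $\lambda =0$ separately, which you do not address, but that is a side issue).

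The concrete misstep is your handling of the factor of $2$. For this particular assertion the paper uses only $(m-1)!\,(k+1)_{m-1}=(1)_{m-1}(k+1)_{m-1}\geq (k+1)^{m-1}$, valid for every $m\geq 1$, and then sums the \emph{full} geometric series $\sum_{m\geq 1}r^{m-1}=1/(1-r)$; that is precisely what yields $\mathcal{N}\,\delta (k+1)\,\beta (d)/\{\mathcal{M}\,k\,[4\delta (k+1)-\beta (d)]\}$. You instead invoke $(s)_{n}n!\geq 2s^{n}$ and assert that $(k+1)_{m-1}(m-1)!\geq 2(k+1)^{m-1}$ ``is sharp'' at $m=2$. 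It is not sharp there; it is false: at $m=2$ the left side is $k+1$ and the right side is $2(k+1)$. Moreover, had you legitimately kept a $\tfrac{1}{2}$ on the tail you would obtain something of the form $P\bigl(1+\tfrac{r^{2}}{2(1-r)}+r\bigr)$ or $P\bigl(1+\tfrac{r}{2(1-r)}\bigr)$ rather than $P/(1-r)$, and neither simplifies to the stated right-hand side (they are strictly smaller), contradicting your claim that the algebra lands ``precisely'' on (\ref{eq:2.19}). The repair is immediate: drop the factor of $2$ entirely here --- it is needed only in Theorem \ref{theorem2.4} and in the later assertions of Theorem \ref{theorem2.5}, where the $m\geq 2$ tail is genuinely split off from the leading term --- and the full geometric series gives exactly the claimed bound. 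With that correction your argument coincides with the paper's.
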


\begin{equation}
\displaystyle\left\vert \frac{z\left( D_{\lambda }^{n,\gamma }\varphi
_{v,b,d}(z)\right) ^{\prime }}{D_{\lambda }^{n,\gamma }\varphi _{v,b,d}(z)}%
-1\right\vert \leqq \frac{8\mathcal{N}\ \delta (k+1)\beta (d)}{32\mathcal{M}%
\ k\ \delta (k+1)-8\alpha (2k+1)\beta (d)+\mathcal{N}\ \left[ \beta (d)%
\right] ^{2}}\quad (z\in \mathbb{U}),  \label{eq:2.20}
\end{equation}%
\newline

\begin{eqnarray}
\displaystyle\frac{4\mathcal{M}\ k\ \delta (k+1)-\alpha (3k+2)\beta (d)}{%
\mathcal{M}\ k[4\delta (k+1)-\beta (d)]} &\leqq &|z\left( D_{\lambda
}^{n,\gamma }\varphi _{v,b,d}(z)\right) ^{\prime }|  \label{eq:2.21} \\
&\leqq &\frac{4\mathcal{M}\ k\ \delta (k+1)+\alpha (k+2)\beta (d)}{\mathcal{M%
}\ k[4\delta (k+1)-\beta (d)]}\quad (z\in \mathbb{U}),
\end{eqnarray}%
\newline

\begin{equation}
\displaystyle\left\vert z^{2}\left( D_{\lambda }^{n,\gamma }\varphi
_{v,b,d}(z)\right) ^{\prime \prime }\right\vert \leqq \frac{\mathcal{N}\
\beta (d)}{2\mathcal{M}\ k}\frac{4\delta (k+1)+\beta (d)}{4\delta
(k+1)-\beta (d)}\quad (z\in \mathbb{U})  \label{eq:2.22}
\end{equation}

\begin{proof}
We first prove the assertation $\left( \ref{eq:2.19}\right) $ of Theorem $%
\left( \ref{theorem2.5}\right) $ when $\lambda >0.$\newline
Indeed, by using the following:\newline
\begin{equation*}
(n-1)!=(1)_{n-1}
\end{equation*}%
\begin{equation*}
(s)_{n}\geq s^{n}
\end{equation*}%
\begin{equation*}
(s)_{n}\leq (s+n-1)^{n}
\end{equation*}%
then we have,%
\begin{eqnarray*}
(n-1)!(k+1)_{n-1} &=&(1)_{n-1}(k+1)_{n-1} \\
&\geqq &(k+1)^{n-1}
\end{eqnarray*}%
we obtain for all $z\in \mathbb{U}$%
\begin{eqnarray*}
\displaystyle\left\vert (D_{\lambda }^{n,\gamma }\varphi
_{v,b,d}^{(z)})^{\prime }-\frac{(D_{\lambda }^{n,\gamma }\varphi
_{v,b,d}^{(z)})}{z}\right\vert &=&\left\vert \sum_{m=1}^{\infty }\frac{%
m(-d)^{m}}{m!4^{m}(k)_{m}}\left[ \frac{\left( 1+\frac{1}{\lambda }\right)
_{m}(2)_{m}}{\left( \frac{1}{\lambda }\right) _{m}(2-\gamma )_{m}}\right]
^{n}z^{m}\right\vert \\
&\leq &\sum_{m=1}^{\infty }\frac{m|d|^{m}}{m!4^{m}(k)_{m}}\left[ \frac{%
\left( 1+\frac{1}{\lambda }\right) _{m}(2)_{m}}{\left( \frac{1}{\lambda }%
\right) _{m}(2-\gamma )_{m}}\right] ^{n} \\
&=&\left( \frac{\left( 1+\frac{1}{\lambda }\right) (2)}{\left( \frac{1}{%
\lambda }\right) (2-\gamma )}\right) ^{n} \\
&&\times \frac{|d|}{4k}\sum_{m=1}^{\infty }\frac{|d|^{m-1}}{%
4^{m-1}(m-1)!(k+1)_{m-1}}\left[ \frac{\left( 2+\frac{1}{\lambda }\right)
_{m-1}(3)_{m-1}}{\left( 1+\frac{1}{\lambda }\right) _{m-1}(3-\gamma )_{m-1}}%
\right] ^{n} \\
&\leq &\left( \frac{2(1+\lambda )}{(2-\gamma )}\right) ^{n}\frac{|d|}{4k}%
\sum_{m=1}^{\infty }\frac{|d|^{m-1}}{4^{m-1}(k+1)^{m-1}}\left[ \frac{\left(
2+\frac{1}{\lambda }\right) _{m-1}(3)_{m-1}}{\left( 1+\frac{1}{\lambda }%
\right) _{m-1}(3-\gamma )_{m-1}}\right] ^{n} \\
&\leq &\left( \frac{2(1+\lambda )}{(2-\gamma )}\right) ^{n}\frac{|d|}{4k}%
\sum_{m=1}^{\infty }\left[ \frac{|d|}{4(k+1)}\frac{\left( \frac{1}{\lambda }%
+m\right) ^{n}(m+1)^{n}}{\left( 1+\frac{1}{\lambda }\right) ^{n}(3-\gamma
)^{n}}\right] ^{m-1} \\
&=&\frac{\mathcal{N}~\ \delta (k+1)\beta (d)}{\mathcal{M}\ k[4\delta
(k+1)-\beta (d)]}
\end{eqnarray*}%
When $\lambda =0$ using the same technique we get%
\begin{equation*}
\left\vert (D_{\lambda }^{n,\gamma }\varphi _{v,b,d}^{(z)})^{\prime }-\frac{%
(D_{\lambda }^{n,\gamma }\varphi _{v,b,d}^{(z)})}{z}\right\vert \leq
\left\vert \frac{~2^{n}\ \delta (k+1)\beta (d)}{\mathcal{M}\ k\ [4\delta
(k+1)-\beta (d)\left( \frac{1+\lambda }{1+m}\right) ^{n}]}\right\vert
\end{equation*}%
\newline
Next, by combining theorem $\left( \ref{theorem2.4}\right) $ and the first
assertion, we immediately see that the second assertion of theorem $\left( %
\ref{theorem2.5}\right) $ holds true for all $z\in \mathbb{U}$ if%
\begin{equation*}
32\mathcal{M}\ k\ \delta (k+1)-8\alpha (2k+1)\beta (d)+\mathcal{N}\left[
\beta (d)\right] ^{2}>0
\end{equation*}%
In order to prove the assertion $\left( \ref{eq:2.21}\right) $ of theorem $%
\left( \ref{theorem2.5}\right) $ we make use of the following inequalities%
\newline
\begin{equation*}
(n+1)\leq 2^{n}
\end{equation*}%
\begin{equation*}
n!=(2)_{n-1}
\end{equation*}%
\begin{equation*}
\dfrac{1}{k}(k)_{n}=(k+1)_{n-1}\quad ,n\in N
\end{equation*}%
\begin{eqnarray*}
n!(k+1)_{n-1} &=&(2)_{n-1}(k+1)_{n-1} \\
&\geq &\left[ 2(k+1)\right] ^{n-1}
\end{eqnarray*}%
\begin{equation*}
(s)_{n}\geq s^{n}
\end{equation*}%
\begin{equation*}
(s)_{n}\leq (s+n-1)^{n}
\end{equation*}%
we thus find that%
\begin{eqnarray*}
|z\left( D_{\lambda }^{n,\gamma }\varphi _{v,b,d}(z)\right) ^{\prime }|
&=&\left\vert z+\sum_{m=1}^{\infty }\dfrac{(m+1)(-d)^{m}}{m!4^{m}(k)_{m}}%
\left[ \frac{\left( 1+\frac{1}{\lambda }\right) _{m}(2)_{m}}{\left( \frac{1}{%
\lambda }\right) _{m}(2-\gamma )_{m}}\right] ^{n}z^{m+1}\right\vert \\
&\leq &1+\sum_{m=1}^{\infty }\dfrac{(m+1)|d|^{m}}{m!4^{m}(k)_{m}}\left[ 
\frac{\left( 1+\frac{1}{\lambda }\right) _{m}(2)_{m}}{\left( \frac{1}{%
\lambda }\right) _{m}(2-\gamma )_{m}}\right] ^{n} \\
&=&1+\left( \frac{2(1+\lambda )}{(2-\gamma )}\right) ^{n}\dfrac{|d|}{2k}%
\sum_{m=1}^{\infty }\dfrac{k(m+1)|d|^{m-1}}{2^{m-1}2^{m}(k)_{m}(m!)}\left[ 
\frac{\left( 2+\frac{1}{\lambda }\right) _{m-1}(3)_{m-1}}{\left( 1+\frac{1}{%
\lambda }\right) _{m-1}(3-\gamma )_{m-1}}\right] ^{n} \\
&\leq &1+\dfrac{\mathcal{N}\beta (d)}{2\mathcal{M}k}\sum_{m=1}^{\infty }%
\left[ \frac{|d|}{4(k+1)}\frac{\left( \frac{1}{\lambda }+m\right)
^{n}(m+1)^{n}}{\left( 1+\frac{1}{\lambda }\right) ^{n}(3-\gamma )^{n}}\right]
^{m-1} \\
&=&\frac{4\mathcal{M}\ k\ \delta (k+1)+\alpha (k+2)\beta (d)}{\mathcal{M}\
k[4\delta (k+1)-\beta (d)]}
\end{eqnarray*}%
which is positive if%
\begin{equation*}
4\mathcal{M}\ k\ \delta (k+1)+\alpha (k+2)\beta (d)>0
\end{equation*}%
Similarly, by using the inequalities%
\begin{equation*}
|z_{1}-z_{2}|\geq \left\vert |z_{1}|-|z_{2}|\right\vert
\end{equation*}%
\begin{equation*}
(s)_{n}\geq s^{n}
\end{equation*}%
\begin{equation*}
(s)_{n}\leq (s+n-1)^{n}
\end{equation*}%
\begin{equation*}
2n!\geq (n+1)
\end{equation*}%
we have \newline
\begin{eqnarray*}
|z\left( D_{\lambda }^{n,\gamma }\varphi _{v,b,d}(z)\right) ^{\prime }|
&=&\left\vert z+\sum_{m=1}^{\infty }\dfrac{(m+1)(-d)^{m}}{m!4^{m}(k)_{m}}%
\left[ \frac{\left( 1+\frac{1}{\lambda }\right) _{m}(2)_{m}}{\left( \frac{1}{%
\lambda }\right) _{m}(2-\gamma )_{m}}\right] ^{n}z^{m+1}\right\vert \\
&\geq &1-\sum_{m=1}^{\infty }\dfrac{(m+1)|d|^{m}}{m!4^{m}(k)_{m}}\left[ 
\frac{\left( 1+\frac{1}{\lambda }\right) _{m}(2)_{m}}{\left( \frac{1}{%
\lambda }\right) _{m}(2-\gamma )_{m}}\right] ^{n} \\
&=&1-\left( \frac{2(1+\lambda )}{(2-\gamma )}\right) ^{n}\dfrac{|d|}{2k}%
\sum_{m=1}^{\infty }\dfrac{k(m+1)|d|^{m-1}}{2^{m-1}2^{m}(k_{m})(m!)}\left[ 
\frac{\left( 2+\frac{1}{\lambda }\right) _{m-1}(3)_{m-1}}{\left( 1+\frac{1}{%
\lambda }\right) _{m-1}(3-\gamma )_{m-1}}\right] ^{n} \\
&=&\leq 1-\dfrac{\mathcal{N}\ \beta (d)}{2\mathcal{M}\ k}\sum_{m=1}^{\infty }%
\left[ \frac{|d|}{4(k+1)}\frac{\left( \frac{1}{\lambda }+m\right)
^{n}(m+1)^{n}}{\left( 1+\frac{1}{\lambda }\right) ^{n}(3-\gamma )^{n}}\right]
^{m-1} \\
&=&\frac{4\mathcal{M}\ k\ \delta (k+1)-\alpha (3k+2)\beta (d)}{\mathcal{M}\
k[4\delta (k+1)-\beta (d)]}
\end{eqnarray*}%
which is positive if%
\begin{equation*}
4\mathcal{M}\ k\ \delta (k+1)-\alpha (3k+2)\beta (d)>0
\end{equation*}%
We now prove the assertion $\left( \ref{eq:2.22}\right) $ of Theroem $\left( %
\ref{theorem2.5}\right) $ by using the following\newline
\begin{eqnarray*}
(m-1)! &\geq &2^{m-2} \\
(k+1)_{m-1} &\geq &(k+1)^{m-1} \\
m+1 &\leq &2^{m} \\
4(n-1)! &\geq &(n+1)\ 
\end{eqnarray*}%
Thus we have%
\begin{eqnarray*}
\left\vert z^{2}\left( D_{\lambda }^{n,\gamma }\varphi _{v,b,d}(z)\right)
^{\prime \prime }\right\vert &=&\left\vert \sum_{m=1}^{\infty }\dfrac{%
(m+1)m\ (-d)^{m}}{m!4^{m}(k)_{m}}\left[ \dfrac{\left( 1+\frac{1}{\lambda }%
\right) _{m}(2)_{m}}{\left( \dfrac{1}{\lambda }\right) _{m}(2-\gamma )_{m}}%
\right] ^{n}z^{m+1}\right\vert \\
&\leq &\sum_{m=1}^{\infty }\dfrac{(m+1)|d|^{m}}{(m-1)!4^{m}(k)_{m}}\left[ 
\frac{\left( 1+\frac{1}{\lambda }\right) _{m}(2)_{m}}{\left( \frac{1}{%
\lambda }\right) _{m}(2-\gamma )_{m}}\right] ^{n} \\
&=&\left( \dfrac{2(1+\lambda )}{2-\gamma }\right) ^{n} \\
&&\times \dfrac{|d|}{k}\left[ \dfrac{1}{2}+\sum_{m=2}^{\infty }\frac{(m+1)}{%
4(m-1)!}\dfrac{|d|^{m-1}\left( 2+\frac{1}{\lambda }\right)
_{m-1}^{n}(3)_{m-1}^{n}}{4^{m-1}(k+1)_{m-1}\left( 1+\frac{1}{\lambda }%
\right) _{m-1}^{n}(3-\gamma )_{m-1}^{n}}\right] \\
&\leq &\left( \dfrac{2(1+\lambda )}{2-\gamma }\right) ^{n}\dfrac{|d|}{k}%
\left[ \dfrac{1}{2}+\sum_{m=2}^{\infty }\left( \frac{|d|}{4(k+1)}\frac{%
\left( \frac{1}{\lambda }+m\right) ^{n}(m+1)^{n}}{\left( 1+\frac{1}{\lambda }%
\right) ^{n}(3-\gamma )^{n}}\right) ^{m-1}\right] \\
&=&\frac{\mathcal{N}\ \beta (d)}{2\mathcal{M}\ k}\frac{4\delta (k+1)+\beta
(d)}{4\delta (k+1)-\beta (d)}
\end{eqnarray*}%
Finally, by combining the inequalities $\left( \ref{eq:2.21}\right) $ and $%
\left( \ref{eq:2.22}\right) $\ we deduce that $(\gamma )$ holds true for all 
$z\in \mathbb{U}.$ Thus the proof is completed.
\end{proof}

\begin{remark}
Taking $n=0$ in the Theorem\ref{theorem2.4}, we obtain a similar result to
that in \cite{7}.
\end{remark}

\begin{remark}
Taking $n=0$ in the Theorem\ref{theorem2.5}, we obtain a similar result to
that in \cite{11}.
\end{remark}

\section{Univelence Criteria}

\setcounter{equation}{0}

In our present investigation, we need these two univalence criteria which we
recall here as Lemmas $\left( \ref{lemma3.1}\right) $ and$\left( \ref%
{lemma3.2}\right) $ see \cite{14},\cite{13}) .\newline

\begin{lemma}
\label{lemma3.1}(see \cite{14} ). Let $\eta $ and $c$ be complex numbers
such that\newline
\begin{equation*}
\mathfrak{R}(\eta )>0\,\mbox{ and }\,|c|\leqq 1\,\,(c\neq -1).\newline
\end{equation*}%
If the function $f\in A$ satisfies the following inequality:%
\begin{equation*}
\left\vert c|z|^{2\eta }+\left( 1+|z|^{2\eta }\right) \frac{zf^{\prime
\prime }(z)}{\eta f^{\prime }(z)}\right\vert \leqq 1\quad \,(z\in \mathbb{U}%
),
\end{equation*}%
then the function $F_{\eta }$ defined by \newline
\begin{equation}
F_{\eta }(z)=\left( \eta \int_{0}^{z}t^{\eta -1}f^{\prime }(t)dt\right)
^{1/\eta }  \label{eq:3.0}
\end{equation}%
is in the class $S$ of normalized univalent functions in $\mathbb{U}$.%
\newline
\end{lemma}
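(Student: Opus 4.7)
My plan is to deploy the Loewner-chain method. Pommerenke's criterion states: if $L(\cdot,t)$ is analytic on some subdisk $\mathbb{U}_{r_0}$, locally absolutely continuous in $t$, with $L(z,t) = a_1(t)z + \cdots$ where $|a_1(t)|\to\infty$ and $\{L(\cdot,t)/a_1(t)\}$ is a normal family, and if the associated function $p(z,t) := zL_z(z,t)/L_t(z,t)$ satisfies $\mathfrak{R}p(z,t) > 0$ and $|p(z,t)| \leq M$ uniformly, then $L(\cdot,0)$ extends to a univalent map of $\mathbb{U}$. The task is to realize $F_\eta$ as $L(\cdot,0)$ for such a chain, tailored so that the hypothesis of the lemma translates exactly into the positivity of $p$.

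First I would set $F(z) := (F_\eta(z))^{\eta} = \eta\int_0^{z} t^{\eta-1}f'(t)\,dt$ using the principal branch, so that $F'(z) = \eta z^{\eta-1}f'(z)$ and $F(z) = z^\eta + O(z^{\eta+1})$ near the origin, while $F_\eta(z) = z + \cdots$ is analytic on $\mathbb{U}$. Motivated by the structure of the hypothesis, the candidate chain is
\begin{equation*}
L(z,t)^{\eta} \;=\; F\!\left(e^{-t}z\right) \;+\; \bigl(e^{\eta t}-e^{-\eta t}\bigr)\,(e^{-t}z)^{\eta}\bigl[f'(e^{-t}z) + c\bigr].
\end{equation*}
At $t=0$ the prefactor $e^{\eta t}-e^{-\eta t}$ vanishes, so $L(z,0) = F_\eta(z)$. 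The leading coefficient $a_1(t)$ behaves like $e^{t}$ as $t\to\infty$, giving $|a_1(t)|\to\infty$ as required.

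Next I would compute $p(z,t) = zL_z/L_t$. Differentiating the ansatz and using $F'(w) = \eta w^{\eta-1}f'(w)$, the expression collapses to a Möbius form
\begin{equation*}
p(z,t) \;=\; \frac{1+w(z,t)}{1-w(z,t)}, \qquad w(z,t) \;=\; c\,e^{-2\eta t} \;+\; (1-e^{-2\eta t})\,\frac{e^{-t}z\,f''(e^{-t}z)}{\eta\,f'(e^{-t}z)}.
\end{equation*}
The hypothesis of the lemma, applied at the point $e^{-t}z\in\mathbb{U}$ and combined with $|e^{-t}z|^{2\eta} = e^{-2\mathfrak{R}(\eta)t}|z|^{2\eta}$, is precisely the bound $|w(z,t)|\leq 1$, whence $\mathfrak{R}p(z,t) \geq 0$. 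The exclusion $c\neq -1$ together with $\mathfrak{R}\eta > 0$ keeps $w(z,t)$ away from $1$ uniformly on a small subdisk $\mathbb{U}_{r_0}$, which yields $|p(z,t)| \leq M$ there. Normality of $\{L(\cdot,t)/a_1(t)\}$ and local absolute continuity of $L$ in $t$ follow routinely from the explicit closed form.

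The main obstacle is locating the correct ansatz for $L^{\eta}$: the choice of prefactor $(e^{\eta t} - e^{-\eta t})$ and the placement of the constant $c$ inside the bracket are forced by the requirement that the resulting $w(z,t)$ acquire exactly the two-term structure $c\,e^{-2\eta t} + (1-e^{-2\eta t})(\cdot)$ matching the hypothesis. Any other natural-looking chain produces extraneous cross-terms that cannot be absorbed. Once the chain is in hand, the remaining verification is mechanical, and Pommerenke's theorem delivers univalence of $L(\cdot,0) = F_\eta$ on $\mathbb{U}$.
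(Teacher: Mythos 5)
The paper contains no proof of this lemma: it is quoted from Pescar's 1996 article (reference [14]), apart from a sign typo --- the factor should read $1-|z|^{2\eta}$, as the paper itself uses when invoking the lemma in the proof of Theorem \ref{theorem3.1} --- so there is no internal argument to compare against. Your overall strategy, realizing $F_{\eta}$ as the time-zero slice of a Loewner chain and applying Pommerenke's criterion, is exactly the route by which Ahlfors, Becker and Pescar established these criteria, so the plan is sound. However, two steps as written do not work, and both are load-bearing.

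First, the ansatz is wrong. With your chain $L(z,t)^{\eta}=F(u)+(e^{\eta t}-e^{-\eta t})\,u^{\eta}\bigl[f'(u)+c\bigr]$, $u=e^{-t}z$, a direct computation of $(p-1)/(p+1)$ produces a denominator proportional to $f'(u)+c$, not to $f'(u)$; the expression does not collapse to $c\,e^{-2\eta t}+(1-e^{-2\eta t})\frac{uf''(u)}{\eta f'(u)}$, so the hypothesis of the lemma never enters. The constant $c$ must appear in the coefficient of the second term, not additively inside it: the chain that actually works is
\begin{equation*}
L(z,t)^{\eta}\;=\;\eta\int_{0}^{u}v^{\eta-1}f'(v)\,dv\;+\;\frac{e^{2\eta t}-1}{1+c}\,u^{\eta}f'(u),
\end{equation*}
for which one checks $\frac{p-1}{p+1}=c\,e^{-2\eta t}+(1-e^{-2\eta t})\frac{uf''(u)}{\eta f'(u)}$; this is also where the hypothesis $c\neq-1$ is genuinely used (it keeps the coefficient finite and $p+1$ from vanishing identically), rather than in the vague role you assign it. Second, even with the correct chain, your claim that the hypothesis ``applied at the point $e^{-t}z$ is precisely the bound $|w(z,t)|\leq 1$'' is false for $|z|<1$: the hypothesis at $u=e^{-t}z$ controls $\bigl|c|u|^{2\eta}+(1-|u|^{2\eta})\frac{uf''(u)}{\eta f'(u)}\bigr|$ with $|u|^{2\eta}=e^{-2\eta t}|z|^{2\eta}$ (note: not $e^{-2\mathfrak{R}(\eta)t}|z|^{2\eta}$, since one raises the positive real $e^{-t}|z|$ to the complex power $2\eta$), and this coincides with $w(z,t)$ only on the circle $|z|=1$. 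The missing ingredient is the maximum modulus principle: for fixed $t>0$ the map $z\mapsto w(z,t)$ is analytic on the closed disk, its boundary values on $|z|=1$ are exactly the quantity the hypothesis bounds by $1$, and hence $|w(z,t)|\leq 1$ throughout $\mathbb{U}$. With the corrected chain and this maximum-principle step inserted, the rest of your outline (normality, $|a_{1}(t)|\to\infty$, local absolute continuity) is indeed routine.
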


\begin{lemma}
\label{lemma3.2}(see \cite{13}). If $f\in A$ satisfies the following
inequality:\newline
\begin{equation*}
\left( \frac{1-|z|^{2\mathfrak{R}(\mu )}}{\mathfrak{R}(\mu )}\right)
\left\vert \frac{zf^{\prime \prime }(z)}{f^{\prime }(z)}\right\vert \leq
1\quad \quad (z\in \mathbb{U};\mathfrak{R}(\mu )>0),
\end{equation*}%
then, for all $\eta \in \mathbb{C}$ such that $\mathfrak{R}(\eta )\geqq 
\mathfrak{R}(\mu ),$ the function $F_{\eta }$ defined by $\left( \ref{eq:3.0}%
\right) $ is in the class $S$ of normalized univalent functions in $\mathbb{U%
}.$\newline
\end{lemma}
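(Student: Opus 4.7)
The plan is to apply a Loewner chain argument, following the general template of Pommerenke's univalence criterion that underlies both Becker's and Pescar's lemmas. Specifically, I would construct an explicit chain $L:\mathbb{U}\times[0,\infty)\to\mathbb{C}$ of the form $L(z,t)=a_{1}(t)z+a_{2}(t)z^{2}+\cdots$, analytic in $z$ for each $t\geq 0$, such that $L(z,0)=F_{\eta}(z)^{\eta}$ (or a simple rescaling thereof), with $a_{1}(t)\neq 0$ for all $t$, $|a_{1}(t)|\to\infty$ as $t\to\infty$, and $\{L(\cdot,t)/a_{1}(t)\}$ forming a normal family on compact subsets of $\mathbb{U}$. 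The natural ansatz, used by Pascu and adapted from Becker, is
\begin{equation*}
L(z,t)=\eta\int_{0}^{e^{-t}z}u^{\eta-1}f'(u)\,du+\bigl(e^{\eta t}-e^{-\eta t}\bigr)\,(e^{-t}z)^{\eta}\,f'(e^{-t}z)/\eta.
\end{equation*}

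Having fixed this $L$, I would form the auxiliary function
\begin{equation*}
p(z,t):=\frac{\partial L/\partial t}{z\,\partial L/\partial z}
\end{equation*}
and check by direct computation that, with the substitution $w=e^{-t}z$, $p(z,t)$ reduces to a Möbius-type expression in the quantity $wf''(w)/f'(w)$, involving the weights $e^{\pm\eta t}$. The requirement $\Re p(z,t)>0$ for all $z\in\mathbb{U}$ and $t\geq 0$ then translates, after elementary manipulation, into an inequality of the shape
\begin{equation*}
\left|\,c(t)\,|z|^{2\Re\eta}+\bigl(1+|z|^{2\Re\eta}\bigr)\frac{wf''(w)}{\eta f'(w)}\,\right|\leq 1,
\end{equation*}
with $|c(t)|\leq 1$. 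To deduce this from the hypothesis $\frac{1-|z|^{2\Re\mu}}{\Re\mu}\bigl|zf''(z)/f'(z)\bigr|\leq 1$, I would note that $|w|=e^{-t}|z|<1$, so the hypothesis gives $|wf''(w)/f'(w)|\leq\Re\mu/(1-|w|^{2\Re\mu})$, and then use the elementary weight-comparison inequality
\begin{equation*}
\frac{1-x^{\Re\eta/\Re\mu}}{\Re\eta}\geq\frac{1-x}{\Re\mu}\quad(0\leq x\leq 1,\ \Re\eta\geq\Re\mu>0)
\end{equation*}
to interpolate between the $\Re\mu$-weight of the hypothesis and the $\Re\eta$-weight demanded by the chain.

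The main obstacle, and the only nontrivial analytic step, is precisely this positivity verification for $p(z,t)$: one must juggle the two parameters $\eta$ and $\mu$ simultaneously, and the hypothesis $\Re\eta\geq\Re\mu$ enters exactly at the weight-comparison step above — it is the mechanism that allows the stronger assumption on $f$ (with exponent $\mu$) to imply univalence of $F_{\eta}$ for the wider range of $\eta$. Once $\Re p(z,t)>0$ is established, Pommerenke's theorem yields that $L(\cdot,t)$ is univalent in $\mathbb{U}$ for every $t\geq 0$; setting $t=0$ and taking $\eta$-th roots (with the principal branch) gives univalence of $F_{\eta}$, completing the proof.
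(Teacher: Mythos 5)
The paper does not actually prove this lemma: it is quoted verbatim from Pascu's 1987 paper (reference [13]) and used as a black box, so there is no internal argument to measure yours against. Your Loewner-chain strategy is exactly the route taken in that source (and in Becker's original argument that it refines): an explicit chain, Pommerenke's positivity criterion for $p(z,t)=\frac{\partial L/\partial t}{z\,\partial L/\partial z}$, and a reduction to a M\"obius-type inequality in $wf''(w)/f'(w)$ with $w=e^{-t}z$. So the architecture is the standard and correct one.

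There is, however, a concrete error in the one step you single out as the crux. Your ``weight-comparison inequality'' is stated with the inequality reversed. Put $a=\Re(\eta)/\Re(\mu)\geq 1$ and $g(x)=1-x^{a}-a(1-x)$ for $x\in[0,1]$; then $g(1)=0$ and $g'(x)=a\left(1-x^{a-1}\right)\geq 0$, so $g\leq 0$ on $[0,1]$, which gives
\begin{equation*}
\frac{1-x^{\Re(\eta)/\Re(\mu)}}{\Re(\eta)}\;\leq\;\frac{1-x}{\Re(\mu)}\qquad(0\leq x\leq 1,\ \Re(\eta)\geq\Re(\mu)>0),
\end{equation*}
not $\geq$. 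Equivalently, $c\mapsto\left(1-|z|^{2c}\right)/c$ is decreasing in $c>0$. Fortunately this $\leq$ direction is precisely what your argument needs: the hypothesis bounds $|wf''(w)/f'(w)|$ by $\Re(\mu)/\left(1-|w|^{2\Re(\mu)}\right)$, and you must upgrade this to a bound by $\Re(\eta)/\left(1-|w|^{2\Re(\eta)}\right)$, which requires the $\eta$-weight to be the \emph{smaller} of the two. So the proof survives once the sign is corrected, but as written the auxiliary lemma you invoke is false. A secondary point: the chain you write down begins with $(e^{-t}z)^{\eta}$, so it is not of the normalized form $a_{1}(t)z+\cdots$ demanded by Pommerenke's theorem; you must either carry the exterior $1/\eta$-th power (principal branch) through the entire computation, verifying $a_{1}(t)\neq 0$ and $|a_{1}(t)|\rightarrow\infty$, or invoke the version of the subordination-chain criterion adapted to chains of the form $\left[\,\cdot\,\right]^{1/\eta}$. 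This is routine but is exactly where the hypothesis $\Re(\eta)\geq\Re(\mu)>0$ is consumed, so it should not be waved away.
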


Lemma $\left( \ref{lemma3.3}\right) $ below is a consequence of the
above-mentioned Becker's univalence criterion (see \cite{15}) and the
well-known Schwarz lemma.

\begin{lemma}
\label{lemma3.3}(see \cite{15}). Let the parameter $\zeta \in \mathbb{C}$
and $\theta \in \mathbb{R}$ be so constrained that\newline
\begin{equation*}
\mathfrak{R}(\zeta )\geqq 1,\,\,\theta >1\,\,\mbox{ and }\,\,2\theta |\zeta
|\leqq 3\sqrt{3}.
\end{equation*}%
If the function $q\in \mathcal{A}$ satisfies the following inequality:%
\newline
\begin{equation*}
|zq^{\prime }(z)|\leqq \theta \quad \quad (z\in \mathbb{U}),
\end{equation*}%
then the function $\mathcal{G}_{\zeta }:\mathbb{U}\rightarrow \mathbb{C},$
defined by \newline
\begin{equation*}
\mathcal{G}_{\zeta }(z)=\left[ \zeta \int_{0}^{z}t^{\zeta -1}\left(
e^{q(t)}\right) ^{\zeta }dt\right] ^{1/\zeta },
\end{equation*}%
is in the class $\mathcal{S}$ of normalized univalent functions in $\mathbb{U%
}$.
\end{lemma}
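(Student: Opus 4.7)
The plan is to deduce Lemma~\ref{lemma3.3} from Pascu's refinement of Becker's criterion (Lemma~\ref{lemma3.2}), using the Schwarz lemma to convert the hypothesis $|zq'(z)|\leq\theta$ into a bound that is linear in $|z|$. The natural auxiliary function is
\[
 f(z)=\int_{0}^{z} e^{\zeta q(t)}\,dt,
\]
so that $f'(t)=(e^{q(t)})^{\zeta}$ and $\mathcal{G}_{\zeta}(z)=\bigl[\zeta\int_{0}^{z}t^{\zeta-1}f'(t)\,dt\bigr]^{1/\zeta}$ is exactly the function $F_{\eta}$ of Lemma~\ref{lemma3.1} with $\eta=\zeta$. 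Since $q\in\mathcal{A}$ gives $q(0)=0$, we have $f(0)=0$ and $f'(0)=1$, so $f\in\mathcal{A}$.

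First I would apply the Schwarz lemma to $h(z):=zq'(z)$: the function $h$ is analytic on $\mathbb{U}$, satisfies $h(0)=0$, and by hypothesis $|h(z)|\leq\theta$, hence $|zq'(z)|\leq\theta|z|$ throughout the disk. Next I compute
\[
 \frac{zf''(z)}{f'(z)}=\frac{z\cdot\zeta q'(z)\,e^{\zeta q(z)}}{e^{\zeta q(z)}}=\zeta\,zq'(z),
\]
so the Schwarz estimate yields
\[
 \left(1-|z|^{2}\right)\left|\frac{zf''(z)}{f'(z)}\right|
 \;\leq\;|\zeta|\,\theta\,|z|\,(1-|z|^{2}).
\]
The classical one-variable optimisation $\max_{0\leq r\leq 1}r(1-r^{2})=\tfrac{2}{3\sqrt{3}}$ (attained at $r=1/\sqrt{3}$) then gives
\[
 \left(1-|z|^{2}\right)\left|\frac{zf''(z)}{f'(z)}\right|\;\leq\;\frac{2|\zeta|\theta}{3\sqrt{3}}\;\leq\;1,
\]
by the assumption $2\theta|\zeta|\leq 3\sqrt{3}$.

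This is precisely the hypothesis of Lemma~\ref{lemma3.2} with $\mu=1$ (note $\mathfrak{R}(\mu)=1>0$). Since $\mathfrak{R}(\zeta)\geq 1=\mathfrak{R}(\mu)$, Lemma~\ref{lemma3.2} applied with $\eta=\zeta$ shows that $F_{\zeta}=\mathcal{G}_{\zeta}$ is univalent in $\mathbb{U}$. The one conceptual step is recognising that the correct object to bound is $zq'(z)$ rather than $q'(z)$ itself, so that Schwarz turns an $L^{\infty}$ bound into a factor of $|z|$; everything else is a routine computation, and the constant $3\sqrt{3}$ in the hypothesis is exactly what is needed to match the extremal value $\tfrac{2}{3\sqrt{3}}$ of $r(1-r^{2})$.
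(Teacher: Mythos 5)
Your proof is correct, and it follows exactly the route the paper itself indicates for this lemma (which it states without proof, citing \cite{15} and remarking that it is ``a consequence of Becker's univalence criterion and the well-known Schwarz lemma''): apply the Schwarz lemma to $zq'(z)$, compute $zf''(z)/f'(z)=\zeta zq'(z)$ for $f(z)=\int_0^z e^{\zeta q(t)}\,dt$, use $\max_{0\le r\le 1}r(1-r^2)=2/(3\sqrt{3})$, and invoke the Pascu/Becker criterion of Lemma~\ref{lemma3.2} with $\mu=1$ and $\eta=\zeta$. All steps check out, including the verification that $f\in\mathcal{A}$ and that $F_\zeta=\mathcal{G}_\zeta$.
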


In the past two decades, many authors have determined various sufficient
conditions for the univalence of various general families of integral
operators such as (see\cite{18},\cite{19},\cite{6},\cite{20},\cite{11a},\cite%
{11} and \cite{15}).\newline

In this paper we will focus on some integral operators of the following
types\ $\left( \ref{eq6.1}\right) $,$\left( \ref{eq:1.8}\right) $ and $%
\left( \ref{eq7.1}\right) $involving the normalized forms of the generalized
Bessel functions of the first kind as follows%
\begin{eqnarray*}
\mathcal{H}_{v_{1},...,v_{m},b,d,\mu _{1},...,\mu _{m},\eta }(z) &=&\left[
\eta \int_{0}^{z}t^{\eta -1}\ \dprod\limits_{j=1}^{m}\left( \frac{D_{\lambda
}^{n,\gamma }\varphi _{v_{j,b,d}}(t)}{t}\right) ^{1/\mu _{j}}\ dt\right]
^{1/\eta } \\
\mathcal{F}_{_{v_{1},...,v_{m},b,d,m,\mu }}(z) &=&\left[ (m\ \mu +1)\
\int_{0}^{z}\ \dprod\limits_{j=1}^{m}\ \left( D_{\lambda }^{n,\gamma
}\varphi _{v_{j,b,d}}(t)\right) ^{\mu }dt\right] ^{1/(m\ \mu +1)} \\
\mathcal{G}_{v,b,d,\zeta }\left( z\right) &=&\left[ \zeta
\int_{0}^{z}t^{\zeta -1}\left( e^{D_{\lambda }^{n,\gamma }\varphi
_{v,b,d}(t)}\right) ^{\zeta }dt\right] ^{1/\zeta }
\end{eqnarray*}

\begin{theorem}
\label{theorem3.1} Let the parameters $v_{1},\ldots ,v_{m},b\in \mathbb{R}$
and $d\in \mathbb{C}$ be so constrained that\newline
\begin{equation*}
k_{j}=v_{j}+\dfrac{b+1}{2}>\frac{\beta (d)}{4(1+\frac{1}{\lambda }%
)^{n}(3-\gamma )^{n}}-1\quad \quad (j=1,\ldots ,m).
\end{equation*}%
Consider the functions $D_{\lambda }^{n,\gamma }\varphi _{v_{j,b,d}}:\mathbb{%
U}\rightarrow \mathbb{C}$ defined by 
\begin{equation}
D_{\lambda }^{n,\gamma }\varphi _{v_{j,b,d}}(z)=2^{v_{j}}\Gamma \left( v_{j}+%
\frac{b+1}{2}\right) D_{\lambda }^{n,\gamma }\left[
z^{1-v_{j}/2}w_{v_{j},b,d}(\sqrt{z})\right]  \label{eq:3}
\end{equation}%
Also let%
\begin{equation*}
k=\min \{k_{1},\ldots ,k_{m}\},\mathfrak{R}(\eta )>0,\,\,c\in \mathbb{C}%
\backslash \{-1\}\,\,\mbox{and}\,\,\mu _{j}\in \mathbb{C}\backslash
\{0\}\,\,(j=1,\ldots ,m).
\end{equation*}%
Moreover, suppose that these numbers satisfy the following inequality:%
\newline
\begin{equation*}
|c|+\dfrac{8\mathcal{N}\ \delta (k+1)\beta (d)}{32\mathcal{M}\ k\delta
(k+1)-8\ \alpha (2k+1)\beta (d)+\mathcal{N}\ \left[ \beta (d)\right] ^{2}}%
\sum_{j=1}^{m}\dfrac{1}{|\eta \mu _{j}|}\leqq 1.
\end{equation*}%
Then the function $\mathcal{H}_{v_{1},...,v_{m},b,d,\mu _{1},...,\mu
_{m},\eta }(z):\mathbb{U}\rightarrow \mathbb{C},$ defined by%
\begin{equation}
\mathcal{H}_{v_{1},...,v_{m},b,d,\mu _{1},...,\mu _{m},\eta }(z)=\left[ \eta
\int_{0}^{z}t^{\eta -1}\ \dprod\limits_{j=1}^{m}\left( \frac{D_{\lambda
}^{n,\gamma }\varphi _{v_{j,b,d}}(t)}{t}\right) ^{1/\mu _{j}}\ dt\right]
^{1/\eta }
\end{equation}%
is in the class $\mathcal{S}$ of normalized univalent functions in $\mathbb{U%
}.$
\end{theorem}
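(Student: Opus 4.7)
The plan is to realize $\mathcal{H}_{v_1,\ldots,v_m,b,d,\mu_1,\ldots,\mu_m,\eta}$ as the Pescar integral transform $F_\eta$ from Lemma~\ref{lemma3.1} applied to a suitable $f\in A$, and then to verify the hypothesis of that lemma using estimate (\ref{eq:2.20}) from Theorem~\ref{theorem2.5}. Concretely, I would define
$$f(z)=\int_{0}^{z}\prod_{j=1}^{m}\left(\frac{D_{\lambda}^{n,\gamma}\varphi_{v_{j},b,d}(t)}{t}\right)^{1/\mu_{j}}dt,$$
which is analytic with $f'(0)=1$ because the lower bound in Theorem~\ref{theorem2.4}, valid under the hypothesis on each $k_j$, keeps every factor $D_{\lambda}^{n,\gamma}\varphi_{v_j,b,d}(t)/t$ bounded away from zero on $\mathbb{U}$ (so the $1/\mu_j$ powers are well defined by the principal branch). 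By construction $\mathcal{H}_{\ldots}(z)=\bigl(\eta\int_0^z t^{\eta-1}f'(t)\,dt\bigr)^{1/\eta}=F_\eta(z)$.

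Logarithmic differentiation gives
$$\frac{zf''(z)}{f'(z)}=\sum_{j=1}^{m}\frac{1}{\mu_{j}}\left(\frac{z(D_{\lambda}^{n,\gamma}\varphi_{v_{j},b,d}(z))'}{D_{\lambda}^{n,\gamma}\varphi_{v_{j},b,d}(z)}-1\right).$$
For $z\in\mathbb{U}$ and $\mathfrak{R}(\eta)>0$ one has $||z|^{2\eta}|=|z|^{2\mathfrak{R}(\eta)}<1$, so applying the triangle inequality to the quantity in Lemma~\ref{lemma3.1} yields
$$\left|c|z|^{2\eta}+(1-|z|^{2\eta})\frac{zf''(z)}{\eta f'(z)}\right|\leq |c|+\frac{1}{|\eta|}\sum_{j=1}^{m}\frac{1}{|\mu_j|}\left|\frac{z(D_{\lambda}^{n,\gamma}\varphi_{v_{j},b,d})'(z)}{D_{\lambda}^{n,\gamma}\varphi_{v_{j},b,d}(z)}-1\right|.$$

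Now I would invoke inequality (\ref{eq:2.20}) for each summand (with $k$ replaced by $k_j$) and then appeal to monotonicity in $k$ of the right-hand side of (\ref{eq:2.20}) to replace every $k_j$ by the uniform value $k=\min\{k_1,\ldots,k_m\}$, so that the same bound
$$\frac{8\mathcal{N}\,\delta(k+1)\beta(d)}{32\mathcal{M}\,k\,\delta(k+1)-8\alpha(2k+1)\beta(d)+\mathcal{N}\,[\beta(d)]^{2}}$$
controls every term. Summing and invoking the standing hypothesis of the theorem produces
$$\left|c|z|^{2\eta}+(1-|z|^{2\eta})\frac{zf''(z)}{\eta f'(z)}\right|\leq |c|+\frac{8\mathcal{N}\,\delta(k+1)\beta(d)}{32\mathcal{M}\,k\,\delta(k+1)-8\alpha(2k+1)\beta(d)+\mathcal{N}\,[\beta(d)]^{2}}\sum_{j=1}^{m}\frac{1}{|\eta\mu_j|}\leq 1,$$
so Lemma~\ref{lemma3.1} applies and $\mathcal{H}_{\ldots}\in\mathcal{S}$.

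The main obstacle I anticipate is the monotonicity check that allows the passage from the per-index bound $B(k_j)$ to the uniform bound $B(k)$ for $k=\min k_j$; this requires a short algebraic verification using the explicit forms of $\delta(k+w)$, $\alpha(2k+1)$, $\mathcal{M}$ and $\mathcal{N}$, together with the hypothesis $k>\beta(d)/[4(1+1/\lambda)^n(3-\gamma)^n]-1$ which keeps the denominator positive. A secondary care point is the factor $|z|^{2\eta}$ for complex $\eta$: the estimate $||z|^{2\eta}|=|z|^{2\mathfrak{R}(\eta)}\leq 1$ and $|1-|z|^{2\eta}|\leq 1+|z|^{2\mathfrak{R}(\eta)}$ are enough to close the bound above, but one has to be careful not to lose a factor when applying the triangle inequality to the combination $c|z|^{2\eta}+(1-|z|^{2\eta})w$.
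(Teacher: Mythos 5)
Your proposal follows essentially the same route as the paper's proof: the paper likewise reduces to the $\eta=1$ integrand, computes $z\mathcal{H}''/\mathcal{H}'=\sum_{j}\mu_j^{-1}\bigl(z(D_\lambda^{n,\gamma}\varphi_{v_j,b,d})'/D_\lambda^{n,\gamma}\varphi_{v_j,b,d}-1\bigr)$, bounds each summand by $\left(\ref{eq:2.20}\right)$, passes from $k_j$ to $k=\min\{k_1,\ldots,k_m\}$ via the monotonicity of the same function $\phi$, and closes with Lemma $\left(\ref{lemma3.1}\right)$. The one care point you flag, the factor $|1-|z|^{2\eta}|$ for complex $\eta$, is treated in the paper exactly as in your display, namely by bounding it by $1$ without further comment.
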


\begin{proof}
We begin by setting $\eta =1$ in the function $\mathcal{H}%
_{v_{1},...,v_{m},b,d,\mu _{1},...,\mu _{m},\eta }(z):\mathbb{U}\rightarrow 
\mathbb{C},$ as follows:%
\begin{equation*}
\mathcal{H}_{v_{1},...,v_{m},b,d,\mu _{1},...,\mu
_{m},1}(z)=\int_{0}^{z}\prod_{j=1}^{m}\left( \frac{D_{\lambda }^{n,\gamma
}\varphi _{v_{j,b,d}}(t)}{t}\right) ^{1/\mu _{j}}dt.
\end{equation*}%
First of all, we observe that, since $D_{\lambda }^{n,\gamma }\varphi
_{v_{j,b,d}}\in \mathcal{A},$ that is%
\begin{equation*}
D_{\lambda }^{n,\gamma }\varphi _{v_{j,b,d}}(0)=(D_{\lambda }^{n,\gamma
}\varphi _{v_{j,b,d}})^{\prime }(0)-1=0,
\end{equation*}%
we have%
\begin{equation*}
\mathcal{H}_{v_{1},...,v_{m},b,d,\mu _{1},...,\mu _{m},1}\in \mathcal{A},
\end{equation*}%
that is%
\begin{equation*}
\mathcal{H}_{_{v_{1},...,v_{m},b,d,\mu _{1},...,\mu _{m},1}}(0)=\mathcal{H}%
_{_{v_{1},...,v_{n},b,d,\mu _{1},...,\mu _{m},1}}^{\prime }(0)-1=0.
\end{equation*}%
On the other hand, it is easy to see that%
\begin{equation}
\mathcal{H}_{_{v_{1},...,v_{m},b,d,\mu _{1},...,\mu _{m},1}}^{\prime
}(z)=\prod_{j=1}^{m}\left( \frac{D_{\lambda }^{n,\gamma }\varphi
_{v_{j,b,d}}(z)}{z}\right) ^{1/\mu _{j}}
\end{equation}%
we thus find\newline
\begin{equation*}
\dfrac{z\ \mathcal{H}_{_{v_{1},...,v_{m},b,d,\mu _{1},...,\mu
_{m},1}}^{\prime \prime }(z)}{\mathcal{H}_{_{v_{1},...,v_{m},b,d,\mu
_{1},...,\mu _{m},1}}^{\prime }(z)}=\sum_{j=1}^{m}\dfrac{1}{\mu _{j}}\left( 
\frac{z\left( D_{\lambda }^{n,\gamma }\varphi _{v_{j,b,d}}(z)\right)
^{\prime }}{D_{\lambda }^{n,\gamma }\varphi _{v_{j,b,d}}(z)}-1\right)
\end{equation*}%
Thus, by using the inequality $\left( \ref{eq:2.20}\right) $ of Theorem $%
\left( \ref{theorem2.5}\right) $ 7 for each $v_{j}=(j=1,\ldots ,m),$ we
obtain\newline
\begin{eqnarray*}
\left\vert \dfrac{z\ \mathcal{H}_{k_{v_{1},...,v_{m},b,d,\mu _{1},...,\mu
_{m},1}}^{\prime \prime }(z)}{\mathcal{H}_{_{v_{1},...,v_{m},b,d,\mu
_{1},...,\mu _{m},1}}^{\prime }(z)}\right\vert &\leqq &\sum_{j=1}^{m}\dfrac{1%
}{|\mu _{j}|}\left\vert \frac{z\left( D_{\lambda }^{n,\gamma }\varphi
_{v_{j,b,d}}(z)\right) ^{\prime }}{D_{\lambda }^{n,\gamma }\varphi
_{v_{j,b,d}}(z)}-1\right\vert \\
&\leqq &\sum_{j=1}^{m}\dfrac{1}{|\mu _{j}|}\left( \dfrac{8\mathcal{N}\
\delta (k_{j}+1)\beta (d)}{32\mathcal{M}\ k_{j}\ \delta (k_{j}+1)-8\ \alpha
(2k_{j}+1)\beta (d)+\mathcal{N}\ \left[ \beta (d)\right] ^{2}}\right) \\
&\leqq &\sum_{j=1}^{m}\dfrac{1}{|\mu _{j}|}\left( \dfrac{8\mathcal{N}\
\delta (k+1)\beta (d)}{32\mathcal{M}\ k\delta (k+1)-8\ \alpha (2k+1)\beta
(d)+\mathcal{N}\ \left[ \beta (d)\right] ^{2}}\right)
\end{eqnarray*}%
$\left( z\in \mathbb{U};k=\min \{k_{1},\ldots k_{m}\};k_{j}=v_{j}+\dfrac{b+1%
}{2}>\frac{\beta (d)}{4(1+\frac{1}{\lambda })^{n}(3-\gamma )^{n}}-1\quad
(j=1,\ldots ,m)\right) .$\newline
Here we have used the fact that the function $\phi :\left( \frac{\beta (d)}{%
4(1+\frac{1}{\lambda })^{n}(3-\gamma )^{n}}-1,\infty \right) \rightarrow 
\mathbb{R},$ defined by%
\begin{equation*}
\phi (x)=\dfrac{8\mathcal{N}\ \delta (x+1)\beta (d)}{32\mathcal{M}\ x\
\delta (x+1)-8\ \alpha (2x+1)\beta (d)+\mathcal{N}\ \left[ \beta (d)\right]
^{2}},
\end{equation*}%
is decreasing and , consequently, we have%
\begin{eqnarray*}
&&\dfrac{8\mathcal{N}\ \delta (k_{j}+1)\beta (d)}{32\mathcal{M}\ k_{j}\
\delta (k_{j}+1)-8\ \alpha (2k_{j}+1)\beta (d)+\mathcal{N}\ \left[ \beta (d)%
\right] ^{2}} \\
&\leqq &\dfrac{8\mathcal{N}\ \delta (k+1)\beta (d)}{32\mathcal{M}\ k\delta
(k+1)-8\ \alpha (2k+1)\beta (d)+\mathcal{N}\ \left[ \beta (d)\right] ^{2}}%
,\quad (j=1,\ldots ,m).
\end{eqnarray*}%
Finally, by using the triangle inequality and the assertion of Theorem $%
\left( \ref{theorem3.1}\right) $, we obtain\newline
\begin{eqnarray*}
&&\left\vert c|z|^{2\eta }+\left( 1-|z|^{2\eta }\right) \dfrac{z\mathcal{H}%
_{_{v_{1},...,v_{m},b,d,\mu _{1},...,\mu _{m},1}}^{\prime \prime }(z)}{\eta 
\mathcal{H}_{_{v_{1},...,v_{m},b,d,\mu _{1},...,\mu _{m},1}}^{\prime }(z)}%
\right\vert \\
&\leqq &|c|+\dfrac{8\mathcal{N}\ \delta (k+1)\beta (d)}{32\mathcal{M}\
k\delta (k+1)-8\ \alpha (2k+1)\beta (d)+\mathcal{N}\ \left[ \beta (d)\right]
^{2}}\sum_{j=1}^{m}\dfrac{1}{|\eta \mu _{j}|}\leqq 1,
\end{eqnarray*}%
which, in view of Lemma $\left( \ref{lemma3.1}\right) $, implies that $%
\mathcal{H}_{_{v_{1},...,v_{m},b,d,\mu _{1},...,\mu _{m},\eta }}\in \mathcal{%
S}$.\newline
This evidently completes the proof of Theorem $\left( \ref{theorem3.1}%
\right) $.
\end{proof}

Upon setting%
\begin{equation*}
\mu _{1}=\ldots =\mu _{m}=\mu
\end{equation*}%
in Theorem $\left( \ref{theorem3.1}\right) $, we immediately arrive at the
following application of Theorem $\left( \ref{theorem3.1}\right) $.

\begin{corollary}
\label{cor3.1} Let the parameters $v_{1},\ldots v_{m},b,c,d,\eta $ and $%
k_{j}(j=1,\ldots ,m)$ be prescribed as in Theorem $\left( \ref{theorem3.1}%
\right) $. Also let 
\begin{equation*}
k=\min \{k_{1},\ldots ,k_{m}\}\quad \mbox{ and }\quad \mu \in \mathbb{C}%
\backslash \{0\}.
\end{equation*}%
Moreover, suppose that the functions $D_{\lambda }^{n,\gamma }\varphi
_{v_{j,b,d}}\in \mathcal{A}$ are defined by $\left( \ref{eq:3}\right) $ and
the following inequality:%
\begin{equation*}
|c|+\dfrac{m}{\left\vert \mu \eta \right\vert }\left( \dfrac{8\mathcal{N}\
\delta (k+1)\beta (d)}{32\mathcal{M}\ k\delta (k+1)-8\ \alpha (2k+1)\beta
(d)+\mathcal{N}\ \left[ \beta (d)\right] ^{2}}\right) \leqq 1
\end{equation*}%
holds true. Then the function $\mathcal{H}_{_{v_{1},...,v_{n},b,d,\mu
_{1},...,\mu _{m},\eta }}(z):\mathbb{U}\rightarrow \mathbb{C},$ defined by%
\begin{equation}
\mathcal{H}_{v_{1},...,v_{n},b,d,\mu ,\eta }(z)=\left[ \eta
\int_{0}^{z}t^{\eta -1}\ \dprod\limits_{j=1}^{m}\left( \frac{D_{\lambda
}^{n,\gamma }\varphi _{v_{j,b,d}}(t)}{t}\right) ^{1/\mu }\ dt\right]
^{1/\eta }
\end{equation}%
is in the class$\mathcal{S}$ of normalized univalent functions in $\mathbb{U}
$.
\end{corollary}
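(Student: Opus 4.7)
The plan is to derive Corollary \ref{cor3.1} as an immediate specialization of Theorem \ref{theorem3.1} under the assignment $\mu_{1}=\mu_{2}=\cdots =\mu_{m}=\mu$. First, I would verify that the integral operator collapses correctly. With a common value $\mu$, the product
\[
\dprod\limits_{j=1}^{m}\left( \frac{D_{\lambda }^{n,\gamma }\varphi _{v_{j,b,d}}(t)}{t}\right) ^{1/\mu _{j}}
\]
reduces to $\dprod\limits_{j=1}^{m}\bigl( D_{\lambda }^{n,\gamma }\varphi _{v_{j,b,d}}(t)/t\bigr)^{1/\mu}$, while the outer weighting $\eta\int_{0}^{z}t^{\eta-1}(\cdots)\,dt$ and the outer exponent $1/\eta$ are unchanged. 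Hence $\mathcal{H}_{v_{1},\ldots ,v_{m},b,d,\mu _{1},\ldots ,\mu _{m},\eta }$ becomes exactly the operator $\mathcal{H}_{v_{1},\ldots ,v_{m},b,d,\mu ,\eta }$ defined in the statement of the corollary.

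Second, I would check that the hypothesis of Theorem \ref{theorem3.1} reduces verbatim to the one posited here. Under $\mu_{j}\equiv \mu$ the sum collapses as
\[
\sum_{j=1}^{m}\frac{1}{|\eta \mu _{j}|}=\frac{m}{|\eta \mu |},
\]
so the governing inequality
\[
|c|+\dfrac{8\mathcal{N}\,\delta (k+1)\beta (d)}{32\mathcal{M}\,k\,\delta (k+1)-8\alpha (2k+1)\beta (d)+\mathcal{N}\,[\beta (d)]^{2}}\sum_{j=1}^{m}\dfrac{1}{|\eta \mu _{j}|}\leqq 1
\]
of Theorem \ref{theorem3.1} becomes precisely the displayed inequality of Corollary \ref{cor3.1}. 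All remaining assumptions (the ranges of the parameters $v_{j},b,d,c,\eta$, the lower bound on each $k_{j}=v_{j}+\tfrac{b+1}{2}$, the choice $k=\min\{k_{1},\ldots ,k_{m}\}$, and the representation $\left(\ref{eq:3}\right)$ of $D_{\lambda }^{n,\gamma }\varphi _{v_{j,b,d}}$) are carried over without modification.

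Having matched both the hypothesis and the operator, the conclusion $\mathcal{H}_{v_{1},\ldots ,v_{m},b,d,\mu ,\eta }\in \mathcal{S}$ follows by a direct invocation of Theorem \ref{theorem3.1}. There is no genuine obstacle in this argument; the nontrivial analytic content, namely the estimate $\left(\ref{eq:2.20}\right)$ from Theorem \ref{theorem2.5}, the monotonicity of the auxiliary function $\phi$ that permits the replacement of each $k_{j}$ by the minimum $k$, and the application of Lemma \ref{lemma3.1} to the normalized form $F_{\eta }$, has already been executed in the proof of Theorem \ref{theorem3.1}. The corollary is therefore only a restatement of that theorem in the uniform case $\mu _{1}=\cdots =\mu _{m}=\mu$, and the proof amounts to this observation together with the algebraic reduction of the sum.
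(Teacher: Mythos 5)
Your proposal is correct and matches the paper's own derivation exactly: the paper obtains Corollary \ref{cor3.1} by setting $\mu _{1}=\cdots =\mu _{m}=\mu $ in Theorem \ref{theorem3.1}, which is precisely your reduction, with the sum collapsing to $m/|\eta \mu |$ and the operator specializing accordingly. Your write-up simply makes explicit the routine verifications the paper leaves implicit.
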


Our second result in this section provides sufficient conditions for an
integral operator of the type $\left( \ref{eq:1.8}\right) $. The key tools
in the proof are Lemma $\left( \ref{lemma3.2}\right) $ and the inequality $%
\left( \ref{eq:2.20}\right) $ of Theorem $\left( \ref{theorem2.5}\right) $.

\begin{theorem}
\label{theorem3.2} Let the parameters $v_{1},\ldots ,v_{m},b\in \mathbb{R}$
and $d\in \mathbb{C}$ be so constrained that\newline
\begin{equation*}
k_{j}:=v_{j}+\dfrac{b+1}{2}>\frac{\beta (d)}{4(1+\frac{1}{\lambda }%
)^{n}(3-\gamma )^{n}}-1\quad \quad (j=1,\ldots ,m).
\end{equation*}%
Consider the functions $D_{\lambda }^{n,\gamma }\varphi _{v_{j,b,d}}:\mathbb{%
U}\rightarrow \mathbb{C}$ defined by $\left( \ref{eq:3}\right) $. Also let 
\newline
\begin{equation*}
k=\min \{k_{1},\ldots ,k_{m}\},\quad \mbox{and }\quad \mathfrak{R}(\mu )>0,
\end{equation*}%
Moreover, suppose that these numbers satisfy the following inequality:%
\newline
\begin{equation*}
|\mu |\leqq \dfrac{1}{m}\left( \dfrac{32\mathcal{M}\ k\ \delta (k+1)-8\alpha
(2k+1)\beta (d)+\mathcal{N}\ \left[ \beta (d)\right] ^{2}}{8\mathcal{N}\
\delta (k+1)\beta (d)}\right) \mathfrak{R}(\mu ).
\end{equation*}%
Then the function $\mathcal{F}_{_{v_{1},...,v_{m},b,d,m,\mu }}(z):\mathbb{U}%
\rightarrow \mathbb{C},$ defined by 
\begin{equation}
\mathcal{F}_{_{v_{1},...,v_{m},b,d,m,\mu }}(z)=\left[ (m\ \mu +1)\
\int_{0}^{z}\ \dprod\limits_{j=1}^{m}\ \left( D_{\lambda }^{n,\gamma
}\varphi _{v_{j,b,d}}(t)\right) ^{\mu }dt\right] ^{1/(m\ \mu +1)}
\end{equation}%
is in the class $\mathcal{S}$ of normalized univalent functions in $\mathbb{U%
}.$
\end{theorem}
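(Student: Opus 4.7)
My plan is to recognize $\mathcal{F}_{v_{1},\ldots,v_{m},b,d,m,\mu}$ as an integral operator of the form $F_{\eta}$ in Lemma \ref{lemma3.2} (Pascu's criterion) with $\eta=m\mu+1$, and then apply that lemma to an auxiliary function $f$. Explicitly, I would define
\begin{equation*}
f(z)=\int_{0}^{z}\prod_{j=1}^{m}\left(\frac{D_{\lambda}^{n,\gamma}\varphi_{v_{j},b,d}(t)}{t}\right)^{\mu}dt,
\end{equation*}
observe that $f\in\mathcal{A}$ (each factor in the integrand equals $1$ at the origin, so $f(0)=0$ and $f'(0)=1$), and check by direct substitution that
\begin{equation*}
\mathcal{F}_{v_{1},\ldots,v_{m},b,d,m,\mu}(z)=\left(\eta\int_{0}^{z}t^{\eta-1}f'(t)\,dt\right)^{1/\eta}=F_{\eta}(z).
\end{equation*}
Because $\mathfrak{R}(\eta)=m\mathfrak{R}(\mu)+1\geq \mathfrak{R}(\mu)$ for $m\geq 1$ and $\mathfrak{R}(\mu)>0$, Lemma \ref{lemma3.2} then reduces the problem to verifying the Pascu inequality for $f$ with parameter $\mu$.

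Taking logarithmic derivatives of the identity $f'(z)=\prod_{j}\bigl(D_{\lambda}^{n,\gamma}\varphi_{v_{j},b,d}(z)/z\bigr)^{\mu}$ yields
\begin{equation*}
\frac{zf''(z)}{f'(z)}=\mu\sum_{j=1}^{m}\left(\frac{z\left(D_{\lambda}^{n,\gamma}\varphi_{v_{j},b,d}(z)\right)'}{D_{\lambda}^{n,\gamma}\varphi_{v_{j},b,d}(z)}-1\right),
\end{equation*}
and I would bound each summand using inequality \eqref{eq:2.20} of Theorem \ref{theorem2.5}, which applies because every $k_{j}$ satisfies the stipulated lower bound. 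Just as in the proof of Theorem \ref{theorem3.1}, the function
\begin{equation*}
\phi(x)=\frac{8\mathcal{N}\,\delta(x+1)\beta(d)}{32\mathcal{M}\,x\,\delta(x+1)-8\alpha(2x+1)\beta(d)+\mathcal{N}[\beta(d)]^{2}}
\end{equation*}
is decreasing on the admissible range, so each $\phi(k_{j})$ may be replaced by the worst case $\phi(k)$ with $k=\min\{k_{1},\ldots,k_{m}\}$, producing the uniform bound $\left|zf''(z)/f'(z)\right|\leq m|\mu|\phi(k)$.

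To close the argument I would use the elementary estimate $(1-|z|^{2\mathfrak{R}(\mu)})/\mathfrak{R}(\mu)\leq 1/\mathfrak{R}(\mu)$ valid on $\mathbb{U}$ whenever $\mathfrak{R}(\mu)>0$, which gives
\begin{equation*}
\left(\frac{1-|z|^{2\mathfrak{R}(\mu)}}{\mathfrak{R}(\mu)}\right)\left|\frac{zf''(z)}{f'(z)}\right|\leq \frac{m|\mu|\,\phi(k)}{\mathfrak{R}(\mu)}.
\end{equation*}
The hypothesis $|\mu|\leq \frac{1}{m}\phi(k)^{-1}\mathfrak{R}(\mu)$ is exactly the assertion that the right-hand side is at most $1$, so Lemma \ref{lemma3.2} applies and yields $\mathcal{F}_{v_{1},\ldots,v_{m},b,d,m,\mu}\in\mathcal{S}$.

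The main obstacle is essentially bookkeeping rather than analysis: one must carefully disentangle the two roles of the symbol $\mu$ (as the shared integrand exponent and as the Pascu parameter) and verify the compatibility condition $\mathfrak{R}(\eta)\geq\mathfrak{R}(\mu)$ for $\eta=m\mu+1$. All substantive analytic content is already contained in the estimate \eqref{eq:2.20} from Theorem \ref{theorem2.5} and in the monotonicity of $\phi$ recorded in the proof of Theorem \ref{theorem3.1}.
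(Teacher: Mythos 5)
Your proposal is correct and follows essentially the same route as the paper: the same auxiliary function $\widetilde{\mathcal{F}}(z)=\int_{0}^{z}\prod_{j}\bigl(D_{\lambda}^{n,\gamma}\varphi_{v_{j},b,d}(t)/t\bigr)^{\mu}dt$, the same bound via inequality \eqref{eq:2.20} of Theorem \ref{theorem2.5} together with the monotonicity of $\phi$, and the same final appeal to Lemma \ref{lemma3.2} with $\eta=m\mu+1$ after rewriting $\mathcal{F}$ as $F_{\eta}$. No gaps.
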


\begin{proof}
Let us consider the function $\widetilde{\mathcal{F}}%
_{_{v_{1},...,v_{m},b,d,m,\mu }}(z):\mathbb{U}\rightarrow \mathbb{C},$
defined by\newline
\begin{equation*}
\widetilde{\mathcal{F}}_{_{v_{1},...,v_{m},b,d,m,\mu
}}(z)=\int_{0}^{z}\prod_{j=1}^{m}\left( \frac{D_{\lambda }^{n,\gamma
}\varphi _{v_{j},b,d}(t)}{t}\right) ^{\mu }dt.
\end{equation*}%
Observe that,$\widetilde{\mathcal{F}}_{_{v_{1},...,v_{m},b,d,m,\mu }}\in 
\mathcal{A},$ that is ,that%
\begin{equation*}
\widetilde{\mathcal{F}}_{_{v_{1},...,v_{m},b,d,m,\mu }}(0)=\widetilde{%
\mathcal{F}}_{_{v_{1},...,v_{m},b,d,m,\mu }}^{\prime }(0)-1=0.
\end{equation*}%
On the other hand, by using the inequality $\left( \ref{eq:2.20}\right) $ of
Theorem $\left( \ref{theorem2.5}\right) $, the assertion of Theorem $\left( %
\ref{theorem3.2}\right) \ $and the fact that\newline
\begin{gather*}
\frac{8\mathcal{N}\ \delta (k_{j}+1)\beta (d)}{32\mathcal{M}\ k_{j}\ \delta
(k_{j}+1)-8\alpha (2k_{j}+1)\beta (d)+\mathcal{N}\ \left[ \beta (d)\right]
^{2}} \\
\leqq \frac{8\mathcal{N}\ \delta (k+1)\beta (d)}{32\mathcal{M}\ k\ \delta
(k+1)-8\alpha (2k+1)\beta (d)+\mathcal{N}\ \left[ \beta (d)\right] ^{2}}%
\quad (j=1,\ldots ,m),
\end{gather*}%
we have%
\begin{eqnarray*}
&&\dfrac{1-|z|^{2\mathfrak{R}(\mu )}}{\mathfrak{R}(\mu )}\left\vert \dfrac{z%
\widetilde{\mathcal{F}}_{_{v_{1},...,v_{m},b,d,m,\mu }}^{\prime \prime }(z)}{%
\widetilde{\mathcal{F}}_{_{v_{1},...,v_{m},b,d,m,\mu }}^{\prime }(z)}%
\right\vert \\
\newline
&\leqq &\dfrac{|\mu |}{\mathfrak{R}(\mu )}\sum_{j=1}^{m}\left\vert \frac{%
z\left( D_{\lambda }^{n,\gamma }\varphi _{v_{j},b,d}(z)\right) ^{\prime }}{%
D_{\lambda }^{n,\gamma }\varphi _{v_{j},b,d}(z)}-1\right\vert \newline
\\
&\leqq &\dfrac{m|\mu |}{\mathfrak{R}(\mu )}\left( \frac{8\mathcal{N}\ \delta
(k+1)\beta (d)}{32\mathcal{M}\ k\ \delta (k+1)-8\alpha (2k+1)\beta (d)+%
\mathcal{N}\ \left[ \beta (d)\right] ^{2}}\right) \leqq 1\quad \,(z\in 
\mathbb{U}).
\end{eqnarray*}%
Now since\newline
\begin{equation*}
\mathfrak{R}(m\mu +1)>\mathfrak{R}(\mu )\quad \,(m\in \mathbb{N})
\end{equation*}%
and the function $\mathcal{F}_{_{v_{1},...,v_{m},b,d,m,\mu }}(z)$ can be
rewritten in the form:%
\begin{equation*}
\mathcal{F}_{_{v_{1},...,v_{m},b,d,m,\mu }}(z)=\left[ (m\mu
+1)\int_{0}^{z}t^{m\mu }\prod_{j=1}^{m}\left( \dfrac{D_{\lambda }^{n,\gamma
}\varphi _{v_{j},b,d}(t)}{t}\right) ^{\mu }dt\right] ^{1/(m\mu +1)}
\end{equation*}%
which in view of Lemma $\left( \ref{lemma3.2}\right) $, implies that $%
\mathcal{F}_{_{v_{1},...,v_{m},b,d,m,\mu }}(z)\in \mathcal{S}.$ This
evidently completes the proof of Theorem $\left( \ref{theorem3.2}\right) $.
\end{proof}

Choosing $m=1$ in Theorem $\left( \ref{theorem3.2}\right) $, we have the
following result.

\begin{corollary}
\label{cor3.2} Let the paramater $v,b\in \mathbb{R}$ and $d\in \mathbb{C}$
be so constrained that\newline
\begin{equation*}
k=v+\dfrac{b+1}{2}>\frac{\beta (d)}{4(1+\frac{1}{\lambda })^{n}(3-\gamma
)^{n}}-1.
\end{equation*}%
Consider the function $D_{\lambda }^{n,\gamma }\varphi _{v,b,d}:\mathbb{U}%
\rightarrow \mathbb{C}$ defined by $\left( \ref{eq:1}\right) $. Moreover,
suppose that $\mathfrak{R}(\mu )>0$ and\newline
\begin{equation*}
|\mu |\leqq \left( \dfrac{32\mathcal{M}\ k\ \delta (k+1)-8\alpha (2k+1)\beta
(d)+\mathcal{N}\ \left[ \beta (d)\right] ^{2}}{8\mathcal{N}\ \delta
(k+1)\beta (d)}\right) \mathfrak{R}(\mu ).
\end{equation*}%
Then the function $\mathcal{F}_{_{v,b,d,\mu }}(z):\mathbb{U}\rightarrow 
\mathbb{C},$ defined by\newline
\begin{equation}
\mathcal{F}_{_{v,b,d,\mu }}(z)=\left[ (\mu +1)\int_{0}^{z}(D_{\lambda
}^{n,\gamma }\varphi _{v,b,d}(t))^{\mu }dt\right] ^{1/(\mu +1)},
\end{equation}%
is in the class $\mathcal{S}$ of normalized univalent functions in $\mathbb{U%
}.$
\end{corollary}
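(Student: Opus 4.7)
The plan is to obtain Corollary \ref{cor3.2} as the direct specialization of Theorem \ref{theorem3.2} to the case $m=1$, so the whole argument is just a matter of checking that every hypothesis block, together with the operator itself, reduces correctly. First I would identify $v_1 = v$, so that $k_1 = v + \frac{b+1}{2} = k$ and the set $\{k_1,\ldots,k_m\}$ collapses to the singleton $\{k\}$, making $k = \min\{k_1\} = k_1$ automatic. Under this identification, the constraint
$$k_j > \frac{\beta(d)}{4\bigl(1+\tfrac{1}{\lambda}\bigr)^n (3-\gamma)^n} - 1, \qquad j = 1,\ldots,m,$$
required in Theorem \ref{theorem3.2} becomes precisely the single inequality assumed in the corollary.

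Next I would verify that the bound on $|\mu|$ in Theorem \ref{theorem3.2}, namely
$$|\mu| \leq \frac{1}{m}\left(\frac{32\mathcal{M}\,k\,\delta(k+1) - 8\alpha(2k+1)\beta(d) + \mathcal{N}\,[\beta(d)]^2}{8\mathcal{N}\,\delta(k+1)\,\beta(d)}\right)\mathfrak{R}(\mu),$$
has its $1/m$ prefactor equal to $1$ when $m=1$, and so reduces exactly to the assumption of the corollary. Simultaneously, the operator $\mathcal{F}_{v_1,\ldots,v_m,b,d,m,\mu}$ simplifies, since the product $\prod_{j=1}^{1}(D_{\lambda}^{n,\gamma}\varphi_{v_j,b,d}(t))^{\mu}$ contains a single factor $(D_{\lambda}^{n,\gamma}\varphi_{v,b,d}(t))^{\mu}$ and the exponent $(m\mu+1)$ becomes $\mu+1$, matching the integrand and outer power in $\mathcal{F}_{v,b,d,\mu}$ exactly.

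Putting these pieces together, I would conclude that $\mathcal{F}_{v,b,d,\mu}(z) = \mathcal{F}_{v,b,d,1,\mu}(z)$ and invoke Theorem \ref{theorem3.2} to deduce that $\mathcal{F}_{v,b,d,\mu} \in \mathcal{S}$. There is essentially no obstacle: the argument is purely substitutional. The only items warranting care are the disappearance of the $1/m$ factor, the relabeling $\min\{k_1\} = k_1 = k$, and the matching of $(m\mu+1)$ with $\mu+1$; none of these involves any new estimate beyond what Theorem \ref{theorem3.2} already provides.
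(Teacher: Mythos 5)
Your proposal is correct and matches the paper exactly: the paper obtains Corollary \ref{cor3.2} precisely by setting $m=1$ in Theorem \ref{theorem3.2}, and your checks of the reduced hypotheses, the vanishing $1/m$ factor, and the exponent $(m\mu+1)\mapsto \mu+1$ are the whole content of that specialization.
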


By applying Lemma $\left( \ref{lemma3.3}\right) $ and the inequality $\left( %
\ref{eq:2.21}\right) $ of Theorem $\left( \ref{theorem2.5}\right) $, we
easily get the following result.\newline

\begin{theorem}
\label{theorem3.3} Let the parameters $k,b\in \mathbb{R}$ and $d,\zeta \in 
\mathbb{C}$ be so constrained that%
\begin{equation*}
k:=v+\dfrac{b+1}{2}>\frac{\beta (d)}{4(1+\frac{1}{\lambda })^{n}(3-\gamma
)^{n}}-1.
\end{equation*}%
Consider the generalized Bessel function $D_{\lambda }^{n,\gamma }\varphi
_{v,b,d}$ defined by $\left( \ref{eq:1}\right) $. If $\mathfrak{R}(\zeta
)\geqq 1$ and%
\begin{equation*}
|\zeta |\leqq \dfrac{3\sqrt{3}\mathcal{M}\ k[4\delta (k+1)-\beta (d)]}{8\ 
\mathcal{M}\ k\ \delta (k+1)+2\ \alpha (k+2)\beta (d)},
\end{equation*}%
then the function $\mathcal{G}_{v,b,d,\zeta }:\mathbb{U}\rightarrow \mathbb{C%
},$ defined by\newline
\begin{equation}
\mathcal{G}_{v,b,d,\zeta }\left( z\right) =\left[ \zeta \int_{0}^{z}t^{\zeta
-1}\left( e^{D_{\lambda }^{n,\gamma }\varphi _{v,b,d}(t)}\right) ^{\zeta }dt%
\right] ^{1/\zeta }
\end{equation}%
is in the class $\mathcal{S}$ of normalized univalent functions in $\mathbb{U%
}$.
\end{theorem}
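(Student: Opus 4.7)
The plan is to apply Lemma \ref{lemma3.3} directly, taking the analytic function $q$ in that lemma to be $q(z) = D_{\lambda}^{n,\gamma}\varphi_{v,b,d}(z)$. Since $\varphi_{v,b,d} \in \mathcal{A}$ and the operator $D_{\lambda}^{n,\gamma}$ maps $\mathcal{A}$ into $\mathcal{A}$ by its series representation \eqref{eq2.23}, we immediately have $q \in \mathcal{A}$, so the first structural hypothesis of Lemma \ref{lemma3.3} is satisfied.

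Next, I would produce the quantity $\theta$ required by the lemma directly from the upper bound for $|z(D_{\lambda}^{n,\gamma}\varphi_{v,b,d}(z))^{\prime}|$ already proven in Theorem \ref{theorem2.5}, namely the right-hand inequality in \eqref{eq:2.21}. Setting
\begin{equation*}
\theta \;=\; \frac{4\mathcal{M}\,k\,\delta(k+1)+\alpha(k+2)\beta(d)}{\mathcal{M}\,k\,[4\delta(k+1)-\beta(d)]},
\end{equation*}
Theorem \ref{theorem2.5} yields $|zq^{\prime}(z)| \leq \theta$ for every $z\in \mathbb{U}$. The condition $k > \frac{\beta(d)}{4(1+\frac{1}{\lambda})^{n}(3-\gamma)^{n}} - 1$ guarantees $4\delta(k+1) - \beta(d) > 0$, and together with the positivity of $\mathcal{M}$, $\mathcal{N}$, and $\alpha(k+2)$ implied by the parameter constraints, this also gives $\theta > 1$ (one verifies this by noting that the numerator exceeds the denominator since $\alpha(k+2)\beta(d) > 0$ and $4\mathcal{M} k\delta(k+1) > \mathcal{M} k[4\delta(k+1)-\beta(d)]$).

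The hypothesis $|\zeta| \leq \dfrac{3\sqrt{3}\,\mathcal{M}\,k[4\delta(k+1)-\beta(d)]}{8\mathcal{M}\,k\,\delta(k+1)+2\alpha(k+2)\beta(d)}$ is then exactly the inequality $2\theta|\zeta| \leq 3\sqrt{3}$, since the right-hand side of this hypothesis equals $3\sqrt{3}/(2\theta)$. Combined with the assumption $\Re(\zeta)\geq 1$, all three hypotheses of Lemma \ref{lemma3.3} are now in hand.

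Applying Lemma \ref{lemma3.3} to $q(z)=D_{\lambda}^{n,\gamma}\varphi_{v,b,d}(z)$ and the prescribed $\zeta$ therefore shows that $\mathcal{G}_{v,b,d,\zeta}$ belongs to $\mathcal{S}$, completing the proof. The only non-routine step in the plan is the verification $\theta > 1$ (needed because Lemma \ref{lemma3.3} requires $\theta > 1$ strictly), but this reduces to an elementary manipulation of positive terms using the lower bound on $k$; I do not expect any genuine obstacle, so the proof is essentially a direct substitution of Theorem \ref{theorem2.5}'s inequality \eqref{eq:2.21} into Lemma \ref{lemma3.3}.
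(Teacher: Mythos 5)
Your proposal is correct and is precisely the argument the paper intends: it states that Theorem \ref{theorem3.3} follows ``by applying Lemma \ref{lemma3.3} and the inequality \eqref{eq:2.21} of Theorem \ref{theorem2.5},'' which is exactly your substitution of $q=D_{\lambda}^{n,\gamma}\varphi_{v,b,d}$ with $\theta$ equal to the upper bound in \eqref{eq:2.21}, the hypothesis on $|\zeta|$ being $2\theta|\zeta|\leq 3\sqrt{3}$. Your check that $\theta>1$ is also sound (though note it reduces to $(\alpha(k+2)+\mathcal{M}k)\beta(d)=2\mathcal{N}\delta(k+1)\beta(d)>0$ rather than requiring $\alpha(k+2)\beta(d)>0$ separately).
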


\begin{remark}
Taking $n=0$ in the above results, we obtain the same results as \cite{11}.
\end{remark}

\section{Special Cases}

\setcounter{equation}{0}

Taking into account the above results, we have the following particular
cases.

\subsection{Bessel Functions}

Choosing $b=d=1$, in $\left( \ref{eq1.1}\right) $ or $\left( \ref{eq2.1}%
\right) $, we obtain the Bessel function $J_{\nu }(z)$ of the first kind of
order $\nu $ defined by $\left( \ref{eq3.1}\right) $. We observe also that%
\begin{eqnarray*}
D_{\lambda }^{n,\gamma }\mathcal{J}_{3/2}(z) &=&D_{\lambda }^{n,\gamma
}\left( \frac{3\sin \sqrt{z}}{\sqrt{z}}-3\cos \sqrt{z}\right) ,\text{ } \\
\text{ }D_{\lambda }^{n,\gamma }\mathcal{J}_{1/2}(z) &=&D_{\lambda
}^{n,\gamma }\left( \sqrt{z}\sin \sqrt{z}\right) \text{ }\text{ }\QTR{md}{and%
}\text{ }\text{ }D_{\lambda }^{n,\gamma }\mathcal{J}_{-1/2}(z)=D_{\lambda
}^{n,\gamma }\left( z\cos \sqrt{z}\right) .
\end{eqnarray*}

\begin{corollary}
\label{cor4.1}Let the function $\mathcal{J}_{\nu }:\mathbb{U}\rightarrow 
\mathbb{C}$ be defined by%
\begin{equation*}
\mathcal{J}_{\nu }(z)=2^{\nu }\Gamma (\nu +1)z^{1-\nu /2}J_{\nu }(\sqrt{z}).
\end{equation*}%
Also let the following assertions hold true:
\end{corollary}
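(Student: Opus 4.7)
The plan is to recognize Corollary \ref{cor4.1} as a direct specialization of Theorems \ref{theorem3.1}, \ref{theorem3.2}, and \ref{theorem3.3} to the parameter values $b = d = 1$. The crucial observation is that substituting $b = d = 1$ in the series (\ref{eq2.1}) for $w_{v,b,d}$ recovers the classical Bessel function $J_\nu$ of the first kind given by (\ref{eq3.1}); consequently the normalized function $\varphi_{v,1,1}(z) = 2^v \Gamma(v+1) z^{1-v/2} w_{v,1,1}(\sqrt{z})$ coincides exactly with $\mathcal{J}_\nu(z)$, and hence $D_{\lambda}^{n,\gamma} \varphi_{v,1,1} = D_{\lambda}^{n,\gamma} \mathcal{J}_\nu$.

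With this identification in hand, I would first substitute $d = 1$ into the auxiliary quantities introduced just before Theorem \ref{theorem2.4}. The only one that depends on $d$ is $\beta(d)$, which becomes $\beta(1) = \bigl(\tfrac{1}{\lambda}+m\bigr)^n (m+1)^n$, while $\mathcal{M}$, $\mathcal{N}$, $\delta(k+w)$, and $\alpha((w\pm 1)k+w)$ are already independent of $d$. The constraint $k_j = v_j + \tfrac{b+1}{2}$ simplifies to $k_j = v_j + 1$, so the admissibility hypothesis of each main theorem becomes an explicit condition on the Bessel index $v_j$, namely $v_j + 1 > \tfrac{\beta(1)}{4(1 + 1/\lambda)^n (3-\gamma)^n} - 1$.

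After this bookkeeping, each of the three integral operators in Theorems \ref{theorem3.1}, \ref{theorem3.2}, \ref{theorem3.3} can be rewritten with $D_{\lambda}^{n,\gamma} \varphi_{v_j,1,1}$ replaced by $D_{\lambda}^{n,\gamma} \mathcal{J}_{v_j}$, yielding the three corresponding integral operators built from the classical Bessel function. I would then invoke the respective theorem to conclude that each operator lies in the class $\mathcal{S}$ of normalized univalent functions in $\mathbb{U}$, provided the specialized inequality (on $|c|$, on $|\mu|$, or on $|\zeta|$ as appropriate) is satisfied.

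I do not anticipate a genuine obstacle here: the argument is entirely substitutional, and no new analytic ingredient is required beyond what is already established in Sections 2 and 3. The only mild care needed is in transcribing the quantity $\beta(1)$ correctly inside the inequalities and in verifying that the formulas for $D_{\lambda}^{n,\gamma} \mathcal{J}_{\nu}$ in the half-integer cases $\nu \in \{-1/2, 1/2, 3/2\}$ reduce, via the standard closed forms for $J_{\pm 1/2}$ and $J_{3/2}$, to the trigonometric expressions displayed just before the corollary.
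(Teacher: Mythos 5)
Your proposal is correct and coincides with what the paper does: the corollary is stated without proof precisely because it is the direct specialization $b=d=1$ (so $w_{v,1,1}=J_\nu$, $\varphi_{v,1,1}=\mathcal{J}_\nu$, $k_j=\nu_j+1$, $\beta(1)=(\tfrac{1}{\lambda}+m)^n(m+1)^n$) of Theorems \ref{theorem3.1}, \ref{theorem3.2} and \ref{theorem3.3}, exactly as you describe. The only items you leave unchecked — the numerical constants $28/233$, $89/20$, $1.8959\ldots$ and the threshold $-1.25$ — are likewise not verified anywhere in the paper, so your account matches its level of detail.
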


\begin{enumerate}
\item Let $\nu _{1},...,\nu _{m}>-1.25\text{ }\text{ }(m\in \mathbb{N})$.
Consider the functions $D_{\lambda }^{n,\gamma }\mathcal{J}_{\nu _{j}}:%
\mathbb{U}\rightarrow \mathbb{C}$ defined by%
\begin{equation}
D_{\lambda }^{n,\gamma }\mathcal{J}_{\nu _{j}}(z)=2^{\nu _{j}}\Gamma (\nu
_{j}+1)z^{1-\nu _{j}/2}D_{\lambda }^{n,\gamma }J_{\nu _{j}}(\sqrt{z})\text{ }%
\text{ }\text{ }(j=1,...,m).  \label{eq:4.11}
\end{equation}%
Let $\nu =\min \{\nu _{1},...,\nu _{m}\}$ and let the parameters $\eta
,c,\mu _{1},...,\mu _{m}$ be as in Theorem $\left( \ref{theorem3.1}\right) $%
. Moreover, suppose that these numbers satisfy the following inequality: 
\begin{equation*}
|c|+\dfrac{\mathcal{N}\ \delta (v+2)\beta (1)}{4\mathcal{M}\ \left(
v+1\right) \delta (v+2)-\ \alpha (2v+3)\beta (1)+\left( \mathcal{N}\ \left[
\beta (1)\right] ^{2}\right) /8}\sum_{j=1}^{m}\frac{1}{|\eta \mu _{j}|}\leqq
1.
\end{equation*}%
Then the function $\mathcal{H}_{v_{1},...,v_{m},\mu _{1},...,\mu _{m},\eta
}(z):\mathbb{U}\rightarrow \mathbb{C}$, defined by%
\begin{equation}
\mathcal{H}_{v_{1},...,v_{m},\mu _{1},...,\mu _{m},\eta }(z)=\Bigg[\eta
\int_{0}^{z}t^{\eta -1}\prod_{j=1}^{m}\Bigg(\frac{D_{\lambda }^{n,\gamma }%
\mathcal{J}_{\nu _{j}}(t)}{t}\Bigg)^{1/\mu _{j}}dt\Bigg]^{1/\eta }\text{ }%
\text{ },  \label{eq:4.2}
\end{equation}%
is in the class $\mathcal{S}$ of normalized univalent functions in $\mathbb{U%
}$. In the particular case when%
\begin{equation*}
|c|+\frac{28}{233}\frac{1}{|\eta \mu |}\leqq 1,
\end{equation*}%
the function $\mathcal{H}_{3/2,\mu ,\eta }(z):\mathbb{U}\rightarrow \mathbb{C%
},$ defined by%
\begin{equation*}
\mathcal{H}_{3/2,\mu ,\eta }(z)=\Bigg[\eta \int_{0}^{z}t^{\eta -1}\Bigg(%
D_{\lambda }^{n,\gamma }\left( \frac{3\sin \sqrt{t}}{t\sqrt{t}}-\frac{3\cos 
\sqrt{t}}{t}\right) \Bigg)^{1/\mu }dt\Bigg]^{1/\eta }\text{ }\text{ },
\end{equation*}%
is in the class $\mathcal{S}$ of normalized univalent functions in $\mathbb{U%
}$.

\item Let $\nu _{1},...,\nu _{m}>-1.25\text{ }\text{ }(m\in \mathbb{N})$ and
consider the normalized Bessel functions $\mathcal{J}_{\nu _{j}}:\mathbb{U}%
\rightarrow \mathbb{C}$ defined by $\left( \ref{eq:4.11}\right) $. Also let $%
\nu =\min \{\nu _{1},...,\nu _{m}\}$ and $\mathfrak{R}(\mu )>0$ and suppose
that these that numbers satisfy the following inequality:

\begin{equation*}
|\mu |\leqq \frac{1}{m}\left( \dfrac{4\mathcal{M}\ \left( v+1\right) \
\delta (v+2)-\alpha (2v+3)\beta (1)+\left( \mathcal{N}\ \left[ \beta (1)%
\right] ^{2}\right) /8}{\mathcal{N}\ \delta (v+2)\beta (1)}\right) \mathfrak{%
R}(\mu ).
\end{equation*}

Then the function $\mathcal{F}_{_{v_{1},...,v_{m},m,\mu }}(z):\mathbb{U}%
\rightarrow \mathbb{C}$, defined by

\begin{equation}
\mathcal{F}_{_{v_{1},...,v_{m},m,\mu }}(z)=\Bigg[(m\ \mu
+1)\int_{0}^{z}\prod_{j=1}^{m}\big(D_{\lambda }^{n,\gamma }\mathcal{J}_{\nu
_{j}}(t)\big)^{\mu }dt\Bigg]^{1/(m\ \mu +1)},  \label{eq:4.3}
\end{equation}

is in the class $\mathcal{S}$ of normalized univalent functions in $\mathbb{U%
}$. In the particular case when

\begin{equation*}
|\mu |\leqq \frac{89}{20}\mathfrak{R}(\mu ),
\end{equation*}

the function $\mathcal{F}_{1/2,\mu }(z):\mathbb{U}\rightarrow \mathbb{C}$,
defined by

\begin{equation*}
\mathcal{F}_{1/2,\mu }(z)=\Bigg[(\mu +1)\int_{0}^{z}\Big(D_{\lambda
}^{n,\gamma }\left( \sqrt{t}\sin \sqrt{t}\right) \Big)^{\mu }dt\Bigg]%
^{1/(\mu +1)},
\end{equation*}

is in the class $\mathcal{S}$ of normalized univalent functions in $\mathbb{U%
}$.

\item Let $\zeta \in \mathbb{C}$ and $\nu >-1.25$ and consider the
normalized Bessel function $\mathcal{J}_{\nu }(z)$ given by $\left( \ref%
{eq3.1}\right) $. If $\mathfrak{R}(\zeta )\geqq 1$ and%
\begin{equation*}
|\zeta |\leqq \dfrac{3\sqrt{3}\mathcal{M}\ (\nu +1)[4\delta (v+2)-\beta (1)]%
}{8\ \mathcal{M}\ (\nu +1)\ \delta (v+2)+2\ \alpha (v+3)\beta (1)},
\end{equation*}

then the function $\mathcal{G}_{v,\zeta }:\mathbb{U}\rightarrow \mathbb{C}$,
defined by

\begin{equation}
\mathcal{G}_{v,\zeta }(z)=\Bigg[\zeta \int_{0}^{z}t^{\zeta -1}\Big(%
e^{D_{\lambda }^{n,\gamma }\mathcal{J}_{\nu }(t)}\Big)^{\zeta }dt\Bigg]%
^{1/\zeta },  \label{eq:4.4}
\end{equation}

is in the class $\mathcal{S}$ of normalized univalent functions in $\mathbb{U%
}$. In the particular case when $|\zeta |\leqq 1.8959...$, the function $%
\mathcal{G}_{1/2,\zeta }(z):\mathbb{U}\rightarrow \mathbb{C}$, defined by

\begin{equation*}
\mathcal{G}_{1/2,\zeta }(z)=\Bigg[\zeta \int_{0}^{z}t^{\zeta -1}\Big(%
e^{D_{\lambda }^{n,\gamma }\left( \sqrt{t}\sin \sqrt{t}\right) }\Big)^{\zeta
}dt\Bigg]^{1/\zeta },
\end{equation*}

is in the class $\mathcal{S}$ of normalized univalent functions in $\mathbb{U%
}$.
\end{enumerate}

\begin{remark}
Baricz and Frasin proved that the following general integral operators \cite%
{6}:

$\mathcal{H}_{v_{1},...,v_{m},\mu _{1},...,\mu _{m},\eta }(z)$, \text{ }%
\text{ } $\mathcal{F}_{_{v_{1},...,v_{m},m,\mu }}(z)$ \text{ }\text{ } and {%
\ }\text{ } \text{ }$\mathcal{G}_{v,\zeta }(z)$

defined by $\left( \ref{eq:4.2}\right) $, $\left( \ref{eq:4.3}\right) $ and $%
\left( \ref{eq:4.4}\right) $, respectively, are actually univalent for all

$\nu ,\nu _{1},...,\nu _{m}>-0.69098...$

From Corollary $\left( \ref{cor4.1}\right) $,by taking $n=0$ we see that our results $(with%
\text{ }\nu ,\nu _{1},...,\nu _{m}>-1.25)$ are stronger than the
Baricz-Frasin results for the same integral operators (see, for details,\cite%
{6} ).
\end{remark}

\subsection{Modified Bessel Functions}

Taking $b=1$ and $d=-1$ in $\left( \ref{eq1.1}\right) $ or $\left( \ref%
{eq2.1}\right) $, we obtain the modified Bessel function $I_{\nu }(z)$ of
the first kind of order $\nu $ defined by $\left( \ref{eq4.1}\right) $. We
observe also that

\begin{align*}
D_{\lambda }^{n,\gamma }\mathcal{I}_{3/2}(z)& =D_{\lambda }^{n,\gamma
}\left( 3\cos \sqrt{z}-\frac{3\sinh \sqrt{z}}{\sqrt{z}}\right) , \\
D_{\lambda }^{n,\gamma }\mathcal{I}_{1/2}(z)& =D_{\lambda }^{n,\gamma
}\left( \sqrt{z}\sinh \sqrt{z}\right) \text{ }\text{ }\text{ }\text{ }%
\QTR{md}{and}\text{ }\text{ }\text{ }D_{\lambda }^{n,\gamma }\text{ }%
\mathcal{I}_{-1/2}(z)=D_{\lambda }^{n,\gamma }\left( \sqrt{z}\cosh \sqrt{z}%
\right) .
\end{align*}

\begin{corollary}
Let the function $D_{\lambda }^{n,\gamma }\mathcal{I}_{\nu }:\mathbb{U}%
\rightarrow \mathbb{C}$ be defined by%
\begin{equation*}
\mathcal{I}_{\nu }(z)=2^{\nu }\Gamma (\nu +1)z^{1-\nu /2}I_{\nu }(\sqrt{z}).
\end{equation*}%
Also let the following assertions hold true:
\end{corollary}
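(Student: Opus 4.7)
The plan is to prove this corollary as a direct specialization of Theorems \ref{theorem3.1}, \ref{theorem3.2}, and \ref{theorem3.3} with the parameter choice $b=1$, $d=-1$, exactly mirroring the strategy of Corollary \ref{cor4.1} for the ordinary Bessel function. Setting $b=1$ and $d=-1$ in the normalized generalized Bessel function $\varphi_{v,b,d}$, one recovers precisely $\mathcal{I}_{\nu}(z)=2^{\nu}\Gamma(\nu+1)z^{1-\nu/2}I_{\nu}(\sqrt{z})$, so the operator $D_{\lambda}^{n,\gamma}\varphi_{v,1,-1}$ coincides with $D_{\lambda}^{n,\gamma}\mathcal{I}_{\nu}$. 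Since $|d|=|-1|=1$, the quantities $\beta(d)$, $\mathcal{M}$, $\mathcal{N}$, $\delta(k+w)$, and $\alpha(\cdot)$ take the same formal values as in the Bessel case, and the admissibility condition $k=v+\frac{b+1}{2}=v+1>\frac{\beta(d)}{4(1+1/\lambda)^{n}(3-\gamma)^{n}}-1$ again reduces to an explicit lower bound on $\nu$ (which, for the generic parameters used in the particular subcases, yields $\nu>-1.25$).

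First I would restate each of the three assertions (for the operators $\mathcal{H}$, $\mathcal{F}$, and $\mathcal{G}$ built from the $D_{\lambda}^{n,\gamma}\mathcal{I}_{\nu_j}$) in the form in which they follow from Theorems \ref{theorem3.1}, \ref{theorem3.2}, and \ref{theorem3.3} respectively. For each assertion the verification is then a one-line substitution: the hypothesis of the corresponding theorem, after replacing $\varphi_{v_j,b,d}$ by $\mathcal{I}_{\nu_j}$ and inserting $b=1$, $d=-1$, becomes exactly the inequality appearing in the corollary, so univalence of the resulting operator follows immediately.

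For the explicit particular cases ($\nu=3/2,\,1/2,\,-1/2$), I would use the closed-form reductions
\[
\mathcal{I}_{3/2}(z)=3\cos\sqrt{z}-\tfrac{3\sinh\sqrt{z}}{\sqrt{z}},\qquad \mathcal{I}_{1/2}(z)=\sqrt{z}\sinh\sqrt{z},\qquad \mathcal{I}_{-1/2}(z)=\sqrt{z}\cosh\sqrt{z},
\]
to rewrite the integrands of $\mathcal{H}$, $\mathcal{F}$, $\mathcal{G}$ in elementary-function form, and then check numerically that the abstract bound of the parent theorem collapses to the stated constant (analogous to $28/233$, $89/20$, and $1.8959\ldots$ in the Bessel case). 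This reduces to evaluating $\mathcal{M}$, $\mathcal{N}$, $\delta(k+1)$, $\beta(1)$, and $\alpha(\cdot)$ at the chosen $\nu$ and simplifying.

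The main obstacle, and the only non-cosmetic point, is to confirm that the bounds of Theorem \ref{theorem2.4} and Theorem \ref{theorem2.5} are truly insensitive to the sign of $d$. This is immediate because every estimate in those theorems is obtained by passing $|(-d)^{m}|=|d|^{m}$ through the triangle inequality, so replacing $d=1$ by $d=-1$ leaves all upper bounds unchanged. Once this is observed, the three assertions of the corollary and their particular subcases follow verbatim from the corresponding statements in Corollary \ref{cor4.1} with the symbols $\mathcal{J}_{\nu}$ and $J_{\nu}$ replaced by $\mathcal{I}_{\nu}$ and $I_{\nu}$.
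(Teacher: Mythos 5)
Your proposal is correct and follows exactly the route the paper intends: the corollary is a direct specialization of Theorems \ref{theorem3.1}, \ref{theorem3.2}, and \ref{theorem3.3} to $b=1$, $d=-1$, and your key observation that every bound depends on $d$ only through $|d|=\beta(d)$ (so the constants coincide with the $d=1$ Bessel case, up to re-evaluating at the particular $\nu$ chosen in each subcase) is precisely why the paper states it without further argument.
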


\begin{enumerate}
\item Let $\nu _{1},...,\nu _{m}>-1.25$ $(m\in \mathbb{N})$. Consider the
functions $\mathcal{I}_{\nu _{j}}:\mathbb{U}\rightarrow \mathbb{C}$ defined
by%
\begin{equation}
D_{\lambda }^{n,\gamma }\mathcal{I}_{\nu _{j}}(z)=2^{\nu _{j}}\Gamma (\nu
_{j}+1)z^{1-\nu _{j}/2}D_{\lambda }^{n,\gamma }I_{\nu _{j}}(\sqrt{z})\text{ }%
\text{ }\text{ }\text{ }(j=1,...,m).  \label{eq:4.5}
\end{equation}%
Let $\nu =\min \{\nu _{1},...,\nu _{m}\}$ and let the parameters $\eta
,c,\mu _{1},...,\mu _{m}$ be as in Theorem 1. Moreover, suppose that these
numbers satisfy the following inequality:%
\begin{equation*}
|c|+\dfrac{\mathcal{N}\ \delta (v+2)\beta (1)}{4\mathcal{M}\ \left(
v+1\right) \delta (v+2)-\ \alpha (2v+3)\beta (1)+\left( \mathcal{N}\ \left[
\beta (1)\right] ^{2}\right) /8}\sum_{j=1}^{m}\frac{1}{|\eta \mu _{j}|}\leqq
1.
\end{equation*}%
Then the function $\mathcal{H}_{v_{1},...,v_{m},\mu _{1},...,\mu _{m},\eta
}(z):\mathbb{U}\rightarrow \mathbb{C}$, defined by%
\begin{equation}
\mathcal{H}_{v_{1},...,v_{m},\mu _{1},...,\mu _{m},\eta }(z)=\Bigg[\eta
\int_{0}^{z}t^{\eta -1}\prod_{j=1}^{m}\Bigg(\frac{D_{\lambda }^{n,\gamma }%
\mathcal{I}_{\nu _{j}}(t)}{t}\Bigg)^{1/\mu _{j}}dt\Bigg]^{1/\eta },
\label{eq:4.6}
\end{equation}%
is in the class $\mathcal{S}$ of normalized univalent functions in $\mathbb{U%
}$. In the particular case when%
\begin{equation*}
|c|+\frac{28}{233}\frac{1}{|\eta \mu |}\leqq 1,
\end{equation*}%
the function $\mathcal{H}_{3/2,\mu ,\eta }(z):\mathbb{U}\rightarrow \mathbb{C%
}$, defined by%
\begin{equation*}
\mathcal{H}_{3/2,\mu ,\eta }(z)=\Bigg[\eta \int_{0}^{z}t^{\eta -1}\Bigg(%
D_{\lambda }^{n,\gamma }\left( \frac{3\cosh \sqrt{t}}{t}-\frac{3\sinh \sqrt{t%
}}{t\sqrt{t}}\right) \Bigg)^{1/\mu }dt\Bigg]^{1/\eta },
\end{equation*}%
is in the class $\mathcal{S}$ of normalized univalent functions in $\mathbb{U%
}$.

\item Let $\nu _{1},...,\nu _{m}>-1.25$ $(m\in \mathbb{N})$ and consider the
normalized modified Bessel functions $\mathcal{I}_{\nu _{j}}:\mathbb{U}%
\rightarrow \mathbb{C}$ defined by $\left( \ref{eq:4.5}\right) $. Let $\nu
=\min \{\nu _{1},...,\nu _{m}\}$ and $\mathfrak{R}(\mu )>0$ and suppose that
these numbers satisfy the following inequality:

\begin{equation*}
|\mu |\leqq \frac{1}{m}\Bigg(\dfrac{4\mathcal{M}\ \left( v+1\right) \ \delta
(v+2)-\alpha (2v+3)\beta (1)+\left( \mathcal{N}\ \left[ \beta (1)\right]
^{2}\right) /8}{\mathcal{N}\ \delta (v+2)\beta (1)}\Bigg)\mathfrak{R}(\mu ).
\end{equation*}

Then the function $\mathcal{F}_{\nu _{1},...,\nu _{m},m,\mu }(z):\mathbb{U}%
\rightarrow \mathbb{C}$, defined by

\begin{equation}
\mathcal{F}_{\nu _{1},...,\nu _{m},m,\mu }(z)=\Bigg[(m\ \mu
+1)\int_{0}^{z}\prod_{j=1}^{m}\big(D_{\lambda }^{n,\gamma }\mathcal{I}_{\nu
_{j}}(t)\big)^{\mu }dt\Bigg]^{1/(m\ \mu +1)},  \label{eq:4.7}
\end{equation}

is in the class $\mathcal{S}$ of normalized univalent functions in $\mathbb{U%
}$ In the particular case when

\begin{equation*}
|\mu |\leqq \frac{89}{20}\mathfrak{R}(\mu ),
\end{equation*}%
the function $\mathcal{F}_{1/2,\mu }(z):\mathbb{U}\rightarrow \mathbb{C}$,
defined by%
\begin{equation*}
\mathcal{F}_{1/2,\mu }(z)=\Bigg[(\mu +1)\int_{0}^{z}\Big(D_{\lambda
}^{n,\gamma }\left( \sqrt{t}\sinh \sqrt{t}\right) \Big)^{\mu }dt\Bigg]%
^{1/(\mu +1)},
\end{equation*}%
is in the class $\mathcal{S}$ of normalized univalent functions in $\mathbb{U%
}$.

\item Let $\zeta \in \mathbb{C}$ and $\nu >-1.25$ and consider the
normalized modified Bessel functions $\mathcal{I}_{\nu }(t)$ given by $%
\left( \ref{eq4.1}\right) $. If $\mathfrak{R}(\zeta )\geqq 1$ and

\begin{equation*}
|\zeta |\leqq \dfrac{3\sqrt{3}\mathcal{M}\ (\nu +1)[4\delta (v+2)-\beta (1)]%
}{8\ \mathcal{M}\ (\nu +1)\ \delta (v+2)+2\ \alpha (v+3)\beta (1)},
\end{equation*}

then the function $\mathcal{G}_{\nu ,\zeta }:\mathbb{U}\rightarrow \mathbb{C}
$, defined by

\begin{equation*}
\mathcal{G}_{\nu ,\zeta }(z)=\Bigg[\zeta \int_{0}^{z}t^{\zeta -1}\Big(%
e^{D_{\lambda }^{n,\gamma }\mathcal{I}_{\nu }(t)}\Big)^{\zeta }dt\Bigg]%
^{1/\zeta },
\end{equation*}

is in the class $\mathcal{S}$ of normalized univalent functions in $\mathbb{U%
}$. In the particular case when $|\zeta |\leqq 1.1809...$, the function $%
\mathcal{G}_{-1/2,\zeta }(z):\mathbb{U}\rightarrow \mathbb{C}$, defined by

\begin{equation*}
\mathcal{G}_{-1/2,\zeta }(z)=\Bigg[\zeta \int_{0}^{z}t^{\zeta -1}\Big(%
e^{D_{\lambda }^{n,\gamma }\left( t\cosh \sqrt{t}\right) }\Big)^{\zeta }dt%
\Bigg]^{1/\zeta },
\end{equation*}

is in the class $\mathcal{S}$ of normalized univalent functions in $\mathbb{U%
}$.
\end{enumerate}

\begin{center}
\textsc{Acknowledgment}
\end{center}

The present investigation was supported under ( The scientific and Research
Project of University of Dammam) Number 2013180.\newline

\end{document}